\newtheorem{thm}{Theorem}[section]
\newtheorem{lem}[thm]{Lemma}
\newtheorem{prop}[thm]{Proposition}
\newtheorem{rem}[thm]{Remark}
\def\XXint#1#2#3{{\setbox0=\hbox{$#1{#2#3}{\int}$ }
\vcenter{\hbox{$#2#3$ }}\kern-.6\wd0}}
\newcommand{\RR}{\mathbb{R}}      % for Real numbers
\newcommand{\N}{\mathbb{N}}         % Naturals
\newcommand{\eps}{\varepsilon}        % we use the curly epsilon, in accord with Muratov
\newcommand{\sprime}{^\prime}        % single prime
\newcommand{\dprime}{^{\prime\prime}}  % double prime
\newcommand{\rhom}{\rho_\text{min}}      % minimal energy density of ansatz
\newcommand{\rhoc}{\rho_{c1}}             % for our linear energy bound
\newcommand{\sint}[3]{\int_{#1}{#2} \: d{#3}}  % single integral: 1=region, 2=integrand, 3=variable
\newcommand{\norm}{|x-y|^\alpha} % denominator of kernel integrand
\newcommand{\dv}{\Delta V}  % nonlocal energy difference
\begin{document}

\title{\bf On an isoperimetric problem with a competing non-local
  term: Quantitative results}

\author[1]{Cyrill B. Muratov} 
\author[1,2]{Anthony Zaleski}
\affil[1]{Department of Mathematical Sciences, New Jersey Institute of
  Technology, Newark, NJ 07102} 
\affil[2]{Department of Mathematics, Rutgers University, Piscataway,
  NJ 08854} 
\renewcommand\Authands{ and }
\renewcommand\Affilfont{\itshape\small}

\maketitle

\begin{abstract}
  \noindent This paper provides a quantitative version of the recent
  result of Kn\"upfer and Muratov ({\it Commun. Pure Appl. Math.} {\bf
    66} (2013), 1129--1162) concerning the solutions of an extension
  of the classical isoperimetric problem in which a non-local
  repulsive term involving Riesz potential is present. There it was
  shown that in two space dimensions the minimizer of the considered
  problem is either a ball or does not exist, depending on whether or
  not the volume constraint lies in an explicit interval around zero,
  provided that the Riesz kernel decays sufficiently slowly. Here we
  give an explicit estimate for the exponents of the Riesz kernel for
  which the result holds.
\end{abstract}

%%%%

\section{Introduction} \label{sec:intro}

This paper is concerned with a study of the following non-local
extension of the classical isoperimetric problem: minimize the energy 
\begin{eqnarray}
  \label{E}
  E(F) = P(F) + V(F),
\end{eqnarray}
among all Lebesgue measurable sets $F \subset \mathbb R^n$, $n \geq
2$, subject to the constraint $|F| = m$, with some $m > 0$ fixed. Here
$P(F)$ is the perimeter of the set $F$ in the sense of De Giorgi
\cite{ambrosio}:
\begin{eqnarray}
  \label{P}
  P(F) := \sup \left\{ \int_F \nabla \cdot \phi \, dx : \ \phi \in
    C^1_c(\mathbb R^n; \mathbb R^n), \ |\phi| \leq 1 \right\},
\end{eqnarray}
and $V(F)$ describes a non-local repulsive interaction:
\begin{eqnarray}
  \label{V}
  V(F) := \int_F \int_F {1 \over |x - y|^\alpha} \, dx \, dy,
\end{eqnarray}
for some $\alpha \in (0, n)$. Minimizers of this problem will be
referred to as ``minimizers of $E$ with mass $m$'' in the rest of the
paper.

The variational problem above arises in a number of contexts of
mathematical physics, most notably in the case of $\alpha = 1$ and $n
= 3$, when it corresponds to the classical Gamow's liquid drop model
of an atomic nucleus \cite{gamow30,bohr36,bohr39}. Note that the case
$\alpha = 1$ is of particular importance, since it corresponds to the
repulsive Coulombic interaction and is, therefore, also relevant to a
wide variety of other physical situations, both in three and two space
dimensions (see the discussion in \cite{km:cpam13} for further
details). In particular, during their studies of a closely related
Ohta-Kawasaki energy functional of diblock copolymer systems
\cite{ohta86}, Choksi and Peletier asked if the minimizer of the above
problem with $n = 3$ and $\alpha = 1$ is a ball whenever it exists
\cite{choksi11} (see also \cite{choksi12} for an overview of the
problem and recent results). The answer to this question is not
obvious at all, since the two terms in the energy in \eqref{E} are in
direct competition with each other: while the first term tends to put
the mass together, the second term favors spreading the mass apart as
much as possible. As a consequence, a single ball is no longer a
minimizer if the value of $m$ becomes sufficiently large, since for
large enough values of $m$ it is advantageous to split the ball into
two balls of equal volume and move the resulting smaller balls far
apart. A far more difficult question, however, is whether the ball has
the best {\em shape} among all competitors for the energy minimizers
at a given $m > 0$.

A rather detailed study of the variational problem described above was
recently performed by Kn\"upfer and Muratov \cite{km:cpam12,km:cpam13}
(see also
\cite{lu14,cicalese13,julin12,bonacini13,acerbi13,sternberg11,topaloglu13,figalli14}
for some related recent work). Some basic existence and non-existence
properties of the minimizers of the considered variational problem
were established, together with the more detailed information about
the shape of the minimizers in certain parameter regimes
\cite{km:cpam12,km:cpam13}. We summarize those findings under a
simplifying assumption of $n \leq 3$, corresponding to the spatial
dimensionality of the problems of physical interest. In this case it
is known that a minimizer of $E$ with mass $m$ exists for all $m \leq
m_1$, for some $m_1 > 0$ depending on $\alpha$ and $n$. If,
furthermore, $\alpha < 2$, then there exist $m_0 > 0$ and $m_2 > 0$
depending on $\alpha$ and $n$ such that the minimizer is a ball
whenever $m \leq m_0$, and minimizers do not exist whenever $m > m_2$.

Clearly, if $\alpha < 2$ and $n \leq 3$, we have $0 < m_0 \leq m_1
\leq m_2 < \infty$. However, it is not obvious whether one could
choose $m_0 = m_1 = m_2$, indicating that minimizers are balls
whenever they exist. A gap between the values of $m_0$ and $m_1$ would
indicate existence of non-radial minimizers for certain values of
$m$. Similarly, a gap between the values of $m_1$ and $m_2$ would
indicate that the set of values of $m$ for which minimizers exist is
not a bounded interval around the origin. Both of these possibilities
would yield a negative answer to the question of Choksi and Peletier
for the problem under consideration. Moreover, even if $m_0 = m_1 =
m_2$, it is not yet clear that those values are equal to $m_{c1} > 0$,
the value of $m$ at which one ball of mass $m$ has the same energy as
two balls of mass $\tfrac12 m$ infinitely far apart, which is what one
would expect if the splitting mechanism into two equal size balls were
the dominant mechanism for reducing the energy at large masses.

Despite a general lack of understanding of the global structure of the
energy minimizers in the considered problem, some partial results
currently exist in the case of sufficiently slowly decaying kernels in
\eqref{V} \cite{km:cpam12,bonacini13}. In \cite{km:cpam12}, Kn\"upfer
and Muratov showed that when $n = 2$ there exists a universal constant
$\alpha_0 > 0$ such that the minimizer of the considered problem is a
ball whenever it exists, and one can choose $m_0 = m_1 = m_2$ for all
$\alpha < \alpha_0$. This result was recently extended by Bonacini and
Cristoferi to higher dimensions \cite{bonacini13}. At the same time,
it was also shown in \cite{km:cpam12} that when $n = 2$ and $\alpha
\leq \alpha_0$, one can, in fact, choose $m_0 = m_1 = m_2 = m_{c1}$,
where the value of $m_{c1}$ is explicitly given by
\begin{align}
  \label{mc1}
  m_{c1}(\alpha) = \pi \left( \frac{\left(\sqrt{2}-1\right) \Gamma
      \left(2-\frac{\alpha}{2}\right) \Gamma
      \left(3-\frac{\alpha}{2}\right)}{\pi \left(1 - 2^{\alpha - 2
          \over 2}\right) \Gamma (2-\alpha)} \right)^{2 \over 3 -
    \alpha},
\end{align}
where $\Gamma(z)$ is the Euler Gamma-function \cite{abramowitz},
confirming the global bifurcation picture suggested by Choksi and
Peletier \cite{choksi11,choksi12} in the case of $n = 2$ and $\alpha$
sufficiently small.

Whether such a picture remains valid for all $n \geq 2$ and all
$\alpha \in (0, n)$ is still far from clear. In particular, as a
starting point it would be interesting to know if one could choose
$\alpha_0 = 2$ for $n = 2$ in \cite[Theorem 2.7]{km:cpam12}. At the
very minimum, such a result would require a quantitative version of
the analysis of \cite{km:cpam12}. The goal of this paper is to provide
such an analysis. Here is our main result, which gives the following
quantitative version of this theorem.

\begin{thm}
  \label{t:main}
  Let $n = 2$, let $\alpha \leq 0.034$, and let $m_{c1}$ be given by
  \eqref{mc1}. Then minimizers of $E$ with mass $m$ exist if and only
  if $m \leq m_{c1}$, and every minimizer of $E$ is a disk of radius
  $\sqrt{m/\pi}$.
\end{thm}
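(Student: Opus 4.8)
The plan is to turn the qualitative arguments of \cite{km:cpam12} into effective estimates, tracking the dependence of every constant on $\alpha$ and verifying that the relevant inequalities close for $\alpha \le 0.034$. The global structure of the proof mirrors the three-part comparison $m_0 = m_1 = m_2 = m_{c1}$: first I would establish the \emph{existence threshold}, namely that minimizers exist for $m \le m_{c1}$ and fail to exist for $m > m_{c1}$. The nonexistence half follows from the splitting heuristic already sketched in the introduction: for $m > m_{c1}$ the single ball has strictly higher energy than two far-apart balls of mass $\tfrac12 m$, and one then rules out existence of any minimizer by the concentration-compactness dichotomy argument of \cite{km:cpam12}, showing a minimizing sequence must split. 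The delicate part is existence for all $m$ up to $m_{c1}$, which requires showing that a minimizing sequence does not lose mass to infinity — this is where the quantitative stability estimate is used.

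The core of the argument is the \emph{quantitative isoperimetric comparison}: I would show that for $\alpha \le 0.034$ and any admissible $F$ with $|F| = m \le m_{c1}$, one has $E(F) \ge E(B)$ where $B$ is the ball of the same volume, with equality only if $F$ is a translate of $B$. The strategy, following \cite{km:cpam12}, is to combine the \emph{sharp quantitative isoperimetric inequality} of Fusco–Maggi–Pratelli, $P(F) - P(B) \ge c_n\, |B|\, A(F)^2$ where $A(F)$ is the Fraenkel asymmetry, with an upper bound on how much the nonlocal term $V$ can \emph{decrease} under symmetrization or under passing to the ball, of the form $V(B) - V(F) \le C(\alpha)\, m^{?}\, A(F)^{?}$ (with explicit exponents in $m$ coming from scaling, $V$ scaling like $m^{(2n-\alpha)/n}$ and $P$ like $m^{(n-1)/n}$). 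The whole scheme works provided the perimeter gain dominates the nonlocal loss; since the nonlocal term carries a prefactor that vanishes as $\alpha \to 0$ — morally $V$ degenerates to a constant times $m^2$ plus an $O(\alpha)$ correction, which is why small $\alpha$ is favorable — one gets a window $\alpha \le \alpha_0$, and the content of the theorem is that the bookkeeping yields $\alpha_0 \ge 0.034$.

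The \textbf{main obstacle} I anticipate is making the nonlocal loss estimate $V(B) - V(F) \lesssim A(F)^2$ (or a comparable power) genuinely quantitative with a controllable constant: the bound in \cite{km:cpam12} proceeds through a somewhat indirect argument (truncation of $F$ to bring it close to a ball, Steiner or spherical rearrangement, and a Taylor-type expansion of the Riesz potential), and each of these steps introduces a constant that in the original paper was left implicit. Carrying out a Riesz-kernel perturbation expansion around $\alpha = 0$ — where $|x-y|^{-\alpha} = 1 - \alpha \log|x-y| + O(\alpha^2)$ on compact sets — is the natural device, but one must be careful that $F$ is not a priori bounded, so a preliminary reduction to bounded competitors (via a density/truncation estimate showing that far-flung pieces of $F$ are energetically expensive) is needed, and the constants from that reduction must also be tracked. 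A secondary technical point is that the explicit formula \eqref{mc1} for $m_{c1}$ must be matched against the scaling exponents appearing in the comparison, so that the ``equality at $m = m_{c1}$'' of the two-ball configuration is exactly the borderline of the inequality; verifying this algebraically, together with the numerical check that $0.034$ is within the admissible range, is routine but is where the specific constant in the statement is produced.
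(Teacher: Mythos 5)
Your central estimate does not survive scrutiny. You propose to prove $E(F)\ge E(B)$ for \emph{all} admissible $F$ by pairing the Fusco--Maggi--Pratelli inequality $P(F)-P(B)\gtrsim A(F)^2$ with a bound of the form $V(B)-V(F)\le C(\alpha)\,m^{?}A(F)^{?}$. No such bound in terms of the Fraenkel asymmetry alone can hold: $A(F)$ is bounded by $2$ for every set, while $V(B)-V(F)$ over sets of fixed measure is not bounded above by any multiple of $A(F)^2$ with a useful constant (spread the mass into many far-apart pieces and $V(F)\to$ values far below $V(B)$ while $A(F)$ stays bounded); and since the perimeter gain that FMP provides is also only $\gtrsim A(F)^2\le 4$ times a constant, the comparison cannot close near $m=m_{c1}$, where the two-ball competitor has the same energy as the single ball but asymmetry bounded away from zero. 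The paper never attempts a global comparison over all $F$. It works only with \emph{minimizers}, exploiting their $C^{2,\beta}$ regularity and the Euler--Lagrange equation $\kappa+2\eps v=\mu$ to first prove \emph{convexity} (Lemma \ref{lem:convexity}), then applies Bonnesen's inequality (Lemma \ref{l:bonnesen}) — which for convex sets gives inclusion in an annulus of width $\delta$ with $\delta^2\lesssim D(D+2)$, a Hausdorff-type control much stronger than asymmetry — and finally a rigidity argument: the potential deficit satisfies $\Delta V\le 2C_3\delta^2$ with $C_3=O(\alpha)$ because $|dv^B(1)/dr|=O(\alpha)$ (Lemma \ref{l:vB}), while Bonnesen plus the energy comparison with the ball gives $\delta^2\lesssim\eps\,\Delta V$. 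Composing yields $\delta^2\le O(\eps\alpha)\,\delta^2$, forcing $\delta=0$. This self-improvement through the \emph{derivative of the disk potential at the boundary}, not a kernel expansion around $\alpha=0$, is where the smallness of $\alpha$ enters.

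You are also missing the second pillar of the argument: nonexistence for $m>m_{c1}$ does not follow from the two-ball comparison plus a splitting heuristic alone. The paper needs (i) an ansatz-based upper bound $E(\Omega)\le m\rho_{c1}$ linear in $m$ (Proposition \ref{prop:Escaling}, via the optimal multi-disk configuration), which combined with $V(\Omega)\ge m^2/(\tfrac12 P(\Omega))^\alpha$ and the isoperimetric inequality yields nonexistence above an explicit threshold $m_2(\alpha)$ (Proposition \ref{thm:nonexistence}); (ii) the shape result valid up to a threshold $m_0(\alpha)=m(\eps_1)$ that \emph{exceeds} $m_2(\alpha)$; and (iii) Proposition \ref{p:exist}, which converts ``minimizers are disks whenever they exist for $m\le m_0$ with $m_0>m_{c1}$'' into the sharp existence/nonexistence dichotomy at $m_{c1}$ using the reduction to finite unions of disks from \cite[Proposition 8.8]{km:cpam12}. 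The constant $0.034$ is produced precisely by verifying $m_2(\alpha)<2.806<\min\bigl(m(\eps_0),m(\eps_1)\bigr)$ for $\alpha\le 0.034$ via explicit Gamma-function bounds; your proposal has no mechanism that would generate this comparison.
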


The proof of Theorem \ref{t:main} mostly follows along the lines of
\cite{km:cpam12}, while keeping track of the constants appearing in
all the estimates. We recall that the strategy in proving
\cite[Theorem 2.7]{km:cpam12} was to demonstrate, for all $m \leq M$
with $M > 0$ fixed, that for small enough $\alpha > 0$ depending only
on $M$ the minimizer, if it exists, must be a convex set which is only
a small perturbation of the disk of radius $\sqrt{m/\pi}$ in the
Hausdorff sense. This is achieved by combining the quantitative
version of the isoperimetric inequality with suitable a priori upper
bounds for the energy, together with a careful analysis of the
rigidity of disks with respect to small perturbations. This result is
then combined with a non-existence result for minimizers with $m > M$
for some $M > 0$, which is uniform in $\alpha \ll 1$. Inevitably, this
strategy is guaranteed to work only for sufficiently small values of
$\alpha$. Yet, it is rather surprising that we were only able to prove
our result for such a narrow range of values of $\alpha$, despite our
attempts to strive for the best constants in the analysis wherever
possible. Perhaps this is an indication that the global bifurcation
structure of the considered variational problem may be more complex,
and further non-perturbative studies of the problem are needed.

The rest of the paper is organized as follows. In
Sec. \ref{sec:preliminaries}, we summarize several facts about
minimizers of $E$ with mass $m$ and state a few technical facts. In
Sec. \ref{sec:Ebound}, we derive a tight upper bound on the minimal
energy that scales linearly with $m$ for large masses. In
Sec. \ref{sec:nonexistence}, we give a quantitative version of the
non-existence result for large masses. In Sec. \ref{sec:shape}, we
give a quantitative criterion about when minimizers are balls whenever
they exist. Finally, in Sec. \ref{sec:results} we put all the obtained
estimates together and prove the main Theorem. In this section, we
also give a numerical estimate of the value of $\alpha_0 \approx
0.04273$ that is slightly better than our analytical estimate.

Throughout the paper, all the constants may depend implicitly on
$\alpha$ and $\eps$, a parameter related to $m$ that appears in
Sec. \ref{sec:shape}. These dependences will be suppressed whenever it
does not cause ambiguity in order to simplify the notation. The
algebraic computations were performed with the help of {\sc
  Mathematica 8.0} software.

\section{Preliminaries}
\label{sec:preliminaries}

We start by collecting some basic facts about the considered
variational problem. Even if we stated the original problem in general
spatial dimensionality, in two space dimensions the minimization
problem is equivalent to minimizing $E$ among open sets with a $C^1$
boundary. This is because of the basic regularity property that
minimizers of $E$ inherit from being quasi-minimizers of the perimeter
(see, e.g., \cite{ambrosio98,tamanini84}). We have the following basic
regularity result for the minimizers of $E$ (in the rest of the paper,
we always assume that $n = 2$).

\begin{prop}[\cite{km:cpam12}, Proposition 2.1 ] \label{prp-basic} %
  Let $m > 0$ and let $\Omega$ be a minimizer of $E$ among all open
  sets with boundary of class $C^1$ and $|\Omega| = m$. Then
  \begin{enumerate}[(i)]
  \item $\partial \Omega$ is of class $C^{2,\beta}$, for some
    $\beta \in (0,1)$, with the regularity constants depending only on
    $m$ and $\alpha$.

  \item $\Omega$ is bounded, connected and contains at most finitely
    many holes.

  \item $\partial \Omega$ satisfies the Euler-Lagrange equation
      \begin{align} \label{EL}
        \kappa(x) + 2 v(x) - \mu = 0, \qquad v(x) := \int_\Omega {1 \over |x
          - y|^\alpha} \ dy,
      \end{align}
      where $\kappa(x)$ is the curvature (positive if $\Omega$ is
      convex) at $x \in \partial \Omega$ and $\mu \in {\mathbb R}$.
  \end{enumerate}
\end{prop}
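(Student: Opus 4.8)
The plan is to first show that $\Omega$ is an almost-minimizer of the perimeter with constants depending only on $m$ and $\alpha$, then to combine the regularity theory for such sets with a first-variation computation to obtain \eqref{EL}, and finally to bootstrap the regularity to $C^{2,\beta}$ using the H\"older continuity of the Riesz potential $v$. The starting observation is that $V$ is a lower-order perturbation of $P$: since $|\Omega| = m$, the bathtub principle gives the pointwise bound
\[
  v(x) = \int_\Omega \frac{dy}{|x-y|^\alpha} \le \int_{B_\rho(x)} \frac{dy}{|x-y|^\alpha} = \frac{2\pi}{2-\alpha}\left(\frac{m}{\pi}\right)^{\frac{2-\alpha}{2}}, \qquad \pi\rho^2 = m,
\]
so $v$ is bounded by a constant $C_0(m,\alpha)$, and the analogous bound with $m$ replaced by $2m$ holds for every competitor $F$ with $|F|\le 2m$. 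Decomposing $(F\times F)\triangle(\Omega\times\Omega)$ into the two slices over $F\triangle\Omega$ then yields a Lipschitz estimate of the form $|V(F)-V(\Omega)|\le C_1(m,\alpha)\,|F\triangle\Omega|$. Since for $\Lambda$ large enough the volume-constrained problem is equivalent to the unconstrained minimization of $F\mapsto E(F)+\Lambda\big||F|-m\big|$ (one restores the volume of a local competitor by adding or deleting a small ball placed far from the region of perturbation, a standard argument using only the scaling of $P$, $V$ and $|\cdot|$), we conclude that $\Omega$ is a $\Lambda$-minimizer of $P$ for some $\Lambda=\Lambda(m,\alpha)$.

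\textbf{Regularity and the Euler--Lagrange equation.} Almost-minimizers of the perimeter have $C^{1,\gamma}$ reduced boundary, and in the plane the singular set is empty, so $\partial\Omega$ is a $C^{1,\gamma}$ curve \cite{tamanini84,ambrosio98}. The same density estimates bound each connected component of $\partial\Omega$ below by a fixed amount of perimeter, which, since $P(\Omega)<\infty$, gives finitely many holes; connectedness of $\Omega$ follows from the usual argument — were $\Omega$ the union of two pieces at positive distance, translating one of them to infinity would strictly decrease $V$ while leaving $P$ unchanged, contradicting minimality — which, together with boundedness, yields (ii). For (iii), we test minimality against the flow of a vector field $X\in C^1_c(\RR^2;\RR^2)$ with $\int_\Omega\nabla\cdot X=0$: the first variations $\delta P(\Omega)[X]=\int_{\partial\Omega}\kappa\,(X\cdot\nu)\,d\mathcal{H}^1$ and $\delta V(\Omega)[X]=2\int_{\partial\Omega} v\,(X\cdot\nu)\,d\mathcal{H}^1$ combine to $\int_{\partial\Omega}(\kappa+2v)(X\cdot\nu)\,d\mathcal{H}^1=0$, which forces $\kappa+2v=\mu$ on $\partial\Omega$ for some Lagrange multiplier $\mu\in\RR$, at first in the weak sense.

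\textbf{Bootstrap.} Because $\chi_\Omega\in L^\infty(\RR^2)$ with $|\Omega|=m$, the potential $v$ is globally H\"older continuous with exponent and constant depending only on $m$ and $\alpha$ (indeed, for $\alpha<1$ one has $v\in C^{1,1-\alpha}_{\mathrm{loc}}$). Writing $\partial\Omega$ locally as the graph of $u\in C^{1,\gamma}$, \eqref{EL} reads $u''=(\mu-2v)(1+(u')^2)^{3/2}$, whose right-hand side is $C^{0,\beta}$ with $\beta=\min\{\gamma,\text{H\"older exponent of }v\}$; Schauder estimates then give $u\in C^{2,\beta}$, i.e.\ $\partial\Omega\in C^{2,\beta}$, with the constants controlled by $m$ and $\alpha$, which also promotes \eqref{EL} to a classical identity. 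This establishes (i) and (iii).

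\textbf{Main obstacle.} The two delicate points are the uniform-in-$F$ bound on the Riesz potential underlying the Lipschitz estimate for $V$ — resolved by the bathtub principle above — and, more seriously, the boundedness of minimizers asserted in (ii): ruling out loss of mass at infinity requires a separate truncation argument showing that a far-away component of $\Omega$ can be translated toward the bulk (or removed and reinserted near it) without increasing $E$, exploiting that such a move strictly decreases the interaction energy. Everything else is an application of the classical regularity theory for $\Lambda$-minimizers of the perimeter and of Schauder theory.
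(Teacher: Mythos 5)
This proposition is imported verbatim from \cite{km:cpam12}, and the present paper offers no proof of its own beyond the remark that minimizers of $E$ are quasi-minimizers of the perimeter (citing \cite{ambrosio98,tamanini84}); your argument --- the bathtub bound on $v$, the Lipschitz estimate $|V(F)-V(\Omega)|\le C(m,\alpha)\,|F\triangle\Omega|$, the reduction to $\Lambda$-minimality, the planar regularity theory with empty singular set, the first-variation identity, and the ODE/Schauder bootstrap using the H\"older continuity of the Riesz potential --- is precisely that standard route and is correct. I see no genuine gap; the only points that deserve the extra care you already flag are the restoration of the volume constraint in the $\Lambda$-minimality step and the boundedness/connectedness argument in (ii).
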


\noindent Note that since $\Omega$ in Proposition \ref{prp-basic} is
connected, we also have the following elementary bound:
\begin{align}
  \label{diamP}
  \text{diam}(\Omega) \leq \frac12 P(\Omega),
\end{align}
which will be repeatedly used throughout our paper.

Concerning the minimizers of $E$ (in a wider class of sets of finite
perimeter, also for any dimension $n \geq 2$ and any fixed $\alpha \in
(0, n)$), we know that their existence is guaranteed for all
sufficiently small values of $m$ \cite[Theorem 3.1]{km:cpam13}. For $n
= 2$, existence of minimizers in the sense of Proposition
\ref{prp-basic} then follows, possibly after a redefinition of
$\Omega$ on a set of Lebesgue measure zero \cite[Proposition
2.1]{km:cpam13}. In the context of the present paper, however, we have
the following quantitative improvement of the existence result in
\cite{km:cpam12}.

\begin{prop}
  \label{p:exist}
  Suppose that the minimizer of $E$ with mass $m$, whenever it exists,
  is a disk of radius $\sqrt{m/\pi}$ for all $m \leq m_0$ with some
  $m_0 > m_{c1}$, where $m_{c1}$ is defined in \eqref{mc1}. Then
  \begin{enumerate}[(i)]
  \item The minimizer exists and is a disk for all $m \leq m_{c1}$.
  \item There is no minimizer for all $m_{c1} < m \leq m_0$.
  \end{enumerate}
\end{prop}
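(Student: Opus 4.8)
The plan is to reduce both assertions to the strict subadditivity of the energy of disks for masses up to $m_{c1}$. Write $e(m)$ for the value of $E$ on the disk of mass $m$ (radius $\sqrt{m/\pi}$). By the scaling $E(\lambda F)=\lambda P(F)+\lambda^{4-\alpha}V(F)$ one has $e(m)=c_1 m^{1/2}+c_2 m^{(4-\alpha)/2}$ with $c_1=2\sqrt\pi$ and $c_2=\pi^{-(4-\alpha)/2}V(B_1)>0$, where $V(B_1)$ denotes the value of \eqref{V} on the unit disk, and with exponent $(4-\alpha)/2\in(1,2)$. As recalled in Section~\ref{sec:intro}, $m_{c1}$ of \eqref{mc1} is exactly the mass at which a single disk of mass $m$ has the same energy as two disks of mass $m/2$ infinitely far apart, i.e.\ $e(m_{c1})=2e(m_{c1}/2)$; indeed $e(m)-2e(m/2)=m^{1/2}\bigl(-(\sqrt2-1)c_1+(1-2^{(\alpha-2)/2})c_2\,m^{(3-\alpha)/2}\bigr)$, whose parenthesis is strictly increasing in $m$, so that $e(m)<2e(m/2)$ for $m<m_{c1}$ and $e(m)>2e(m/2)$ for $m>m_{c1}$. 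The first step would be to upgrade this into the statement that $e$ is \emph{strictly subadditive on $(0,m_{c1}]$}: for every partition $m=\sum_{i=1}^k m_i$ with $k\ge2$, all $m_i>0$, and $m\le m_{c1}$, one has $\sum_i e(m_i)>e(m)$, the sole exception being $\{m_{c1}/2,m_{c1}/2\}$ at $m=m_{c1}$, where equality holds. Merging parts two at a time reduces this to the binary case $m=a+b$, which amounts to verifying that $\sigma\mapsto(\sqrt\sigma+\sqrt{1-\sigma}-1)/(1-\sigma^{(4-\alpha)/2}-(1-\sigma)^{(4-\alpha)/2})$ attains its minimum over $\sigma\in(0,1)$ at $\sigma=1/2$.

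\emph{Part (ii).} Suppose $m_{c1}<m\le m_0$ and that a minimizer $\Omega$ of $E$ with mass $m$ exists. By hypothesis $\Omega$ is a disk, so $E(\Omega)=e(m)$. As a competitor take two disjoint disks, each of mass $m/2$, at distance $R$ from each other: its perimeter is independent of $R$, and its nonlocal energy tends, as $R\to\infty$, to twice that of a disk of mass $m/2$, since the cross term in \eqref{V} vanishes; thus its energy tends to $2e(m/2)$. Since $2e(m/2)<e(m)$ for $m>m_{c1}$, for $R$ large enough this competitor has strictly smaller energy than $\Omega$, contradicting minimality. Hence no minimizer of $E$ with mass $m$ exists for $m_{c1}<m\le m_0$.

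\emph{Part (i).} Set $\mathcal E(m):=\inf\{E(F):|F|=m\}$, so $\mathcal E(m)\le e(m)$ for every $m$. By \cite[Theorem 3.1 and Proposition 2.1]{km:cpam13} a minimizer with mass $m$ exists for all sufficiently small $m$, and by hypothesis it is a disk. Combining this with the concentration-compactness description of minimizing sequences in \cite{km:cpam13} and the local Lipschitz bound on $\mathcal E$ coming from the scaling above — which guarantees that the splitting of a minimizing sequence terminates after finitely many steps in pieces each of which possesses a minimizer — one obtains: whenever no minimizer with mass $m$ exists, $\mathcal E(m)=\sum_{i=1}^k\mathcal E(m_i)$ for some partition $m=\sum_{i=1}^k m_i$ with $k\ge2$, $m_i>0$, and each $m_i$ admitting a minimizer. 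Now fix $m\le m_{c1}$ and suppose no minimizer with mass $m$ exists. Then each $m_i<m\le m_{c1}\le m_0$, so each $m_i$ admits a minimizer, which by hypothesis is the disk of mass $m_i$; hence $\mathcal E(m)=\sum_i e(m_i)$. If the partition is not $\{m_{c1}/2,m_{c1}/2\}$, strict subadditivity gives $\mathcal E(m)=\sum_i e(m_i)>e(m)\ge\mathcal E(m)$, a contradiction. If instead $m=m_{c1}$ and the partition is $\{m_{c1}/2,m_{c1}/2\}$, then $\mathcal E(m_{c1})=2e(m_{c1}/2)=e(m_{c1})$, so the disk of mass $m_{c1}$ attains the infimum and is thus a minimizer, again a contradiction. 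Therefore a minimizer with mass $m$ exists, and by hypothesis it is a disk of radius $\sqrt{m/\pi}$.

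\textbf{Main obstacle.} The essential difficulty is the algebraic assertion that equal bisection is optimal among all splits of a total mass $\le m_{c1}$, i.e.\ the one-variable minimization in the first paragraph; it is elementary but genuinely requires an explicit computation, and becomes transparent for $\alpha$ close to $0$, where $(4-\alpha)/2\to2$. A secondary technical point is pinning down the precise form of concentration-compactness used in part~(i) — that a minimizing sequence either converges, up to translation, to a minimizer, or breaks into finitely many pieces each of which is a minimizing sequence for its own mass — which should be extracted from \cite{km:cpam13} together with small-mass existence and the Lipschitz bound.
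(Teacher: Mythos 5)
Your architecture is the same as the paper's: part (ii) is disposed of by the two-equal-disks competitor, and part (i) runs through ``no minimizer $\Rightarrow$ the infimum is achieved by a finite disjoint union of disks, each a minimizer of its own mass'' combined with the fact that a single disk strictly beats any such union when the total mass is at most $m_{c1}$. The difference is that the paper proves neither ingredient from scratch; it imports both from \cite{km:cpam12}. The one-variable minimization you flag as the main obstacle --- that the equal split is the worst binary split, so that merging any two disks of total mass $\le m_{c1}$ strictly decreases the energy except at the threshold equal split --- is exactly \cite[Lemma 3.6]{km:cpam12}, which the paper applies repeatedly to the union $\tilde F=\cup_i B_{R_i}(x_i)$; your reduction to showing that $\sigma\mapsto(\sqrt\sigma+\sqrt{1-\sigma}-1)/\bigl(1-\sigma^{(4-\alpha)/2}-(1-\sigma)^{(4-\alpha)/2}\bigr)$ is minimized at $\sigma=1/2$ is the correct reduction and the claim is true, but as written it is asserted, not proved, and cannot be dismissed by letting $\alpha\to0$ since the proposition makes no smallness assumption on $\alpha$ beyond what the hypothesis encodes. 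Likewise, the concentration--compactness step you describe only loosely (finite termination of the splitting, each piece admitting a minimizer, $\mathcal E(m)=\sum_i\mathcal E(m_i)$) is precisely \cite[Proposition 8.8]{km:cpam12}: for every $F$ with $|F|=m$ there is a finite disjoint union of disks $\tilde F$ with $|\tilde F|=m$ and $E(\tilde F)\le E(F)$, given the standing hypothesis that minimizers of mass $\le m_0$ are disks. So neither gap is a wrong turn --- the skeleton is identical to the paper's --- but both of the steps you leave open are genuine obligations; the intended resolution is to cite them rather than to re-derive them, and your part (ii) and the endgame of part (i) (including the careful treatment of the exceptional equal split at $m=m_{c1}$) are complete as they stand.
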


\begin{proof}
  The proof follows from \cite[Lemma 3.6 and Proposition
  8.8]{km:cpam12}. Indeed, suppose that $m \leq m_0$. If the minimizer
  of $E$ with mass $m$ exists, it is a disk by assumption of the
  proposition. However, by \cite[Lemma 3.6]{km:cpam12} this is not
  possible if $m > m_{c1}$, yielding the second statement. To prove
  the first statement, suppose, by contradiction, that there is no
  minimizer and $m \leq m_{c1}$. Then by \cite[Proposition
  8.8]{km:cpam12} and the assumption of the proposition, for any
  measurable set $F \subset \mathbb R^2$ with $|F| = m$ there is a set
  $\tilde F = \cup_{i=1}^N B_{R_i}(x_i)$, a union of finitely many
  disjoint disks, such that $|\tilde F| = m$ and $E(\tilde F) \leq
  E(F)$. Then, again, repeatedly applying \cite[Lemma 3.6]{km:cpam12},
  we have $E \left( B_{\sqrt{m/\pi}}(0) \right) < E(\tilde F)$,
  indicating that $B_{\sqrt{m/\pi}}(0)$ is a minimizer of $E$ with
  mass $m$, a contradiction.
\end{proof}

We will need several additional properties related to disks as test
configurations. First we give an explicit formula for the potential
energy of a disk of radius $R$.

\begin{lem}[\cite{km:cpam12}, Corollary 3.5 ]
  \label{l:BE}
  We have
  \begin{align}
    \label{EBR}
    E(B_R(0)) = 2 \pi R + {2 \pi^2 \Gamma(2 - \alpha) \over \Gamma
      \left( 2 - {\alpha \over 2} \right) \Gamma \left( 3 - {\alpha
          \over 2} \right) } \, R^{4 - \alpha}.
  \end{align}
\end{lem}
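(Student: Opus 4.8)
The plan is to compute the two terms $P(B_R(0))$ and $V(B_R(0))$ separately. The perimeter term is immediate: in two dimensions $P(B_R(0)) = 2\pi R$, which accounts for the first term in \eqref{EBR}. The real work is to evaluate the Riesz double integral $V(B_R(0)) = \int_{B_R}\int_{B_R} |x-y|^{-\alpha}\,dx\,dy$ and show it equals the stated multiple of $R^{4-\alpha}$. By the scaling $x \mapsto Rx$, $y \mapsto Ry$ we get $V(B_R(0)) = R^{4-\alpha} V(B_1(0))$, so it suffices to identify the constant $V(B_1(0)) = 2\pi^2 \Gamma(2-\alpha)/\big(\Gamma(2-\tfrac{\alpha}{2})\Gamma(3-\tfrac{\alpha}{2})\big)$.

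To evaluate $V(B_1(0))$, I would first reduce the double integral over the disk to a single integral of the function $r \mapsto |B_1(0) \cap B_1(re_1)|$-type overlap, or more efficiently pass to the "one-point" reduction: $V(B_1(0)) = \int_{B_1} w(x)\,dx$ where $w(x) = \int_{B_1}|x-y|^{-\alpha}\,dy$ depends only on $|x|$. A cleaner route is to use the known value of the potential of a uniform disk: for $0 < \alpha < 2$ one has the classical identity $\int_{B_1(0)} |x-y|^{-\alpha}\,dy$ expressible via a hypergeometric function, and then integrate once more in $x$. Alternatively — and this is the approach I would actually carry out — use the Fourier/Hankel representation: $|x-y|^{-\alpha}$ in $\mathbb{R}^2$ has a known Fourier transform proportional to $|\xi|^{\alpha - 2}$, so $V(B_1(0)) = c_\alpha \int_{\mathbb{R}^2} |\widehat{\chi_{B_1}}(\xi)|^2 |\xi|^{\alpha-2}\,d\xi$ with $\widehat{\chi_{B_1}}(\xi) = 2\pi J_1(|\xi|)/|\xi|$, reducing everything to the Weber–Schafheitlin integral $\int_0^\infty J_1(t)^2 t^{\alpha - 3}\,dt$, which evaluates in closed form to a ratio of Gamma functions. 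Matching constants then yields exactly the coefficient in \eqref{EBR}.

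The main obstacle is purely computational: keeping track of the normalization constant $c_\alpha$ in the Fourier transform of the Riesz kernel in dimension $n=2$ and correctly evaluating the Weber–Schafheitlin-type integral, including checking convergence at $t=0$ and $t=\infty$ (which holds precisely for $0 < \alpha < 2$). Since this is a known corollary from \cite{km:cpam12}, I would simply cite that reference for the closed-form evaluation rather than reproducing the Bessel-function manipulation in full; the only new content here is the elementary observation that the perimeter contributes $2\pi R$ and that scaling separates out the $R^{4-\alpha}$ dependence.
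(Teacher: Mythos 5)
Your proposal is correct and, in the end, coincides with what the paper does: the paper offers no proof of this lemma at all — it is imported verbatim as \cite[Corollary 3.5]{km:cpam12} — so deferring to that reference for the closed-form constant, as you ultimately propose, is exactly the paper's route, and your elementary observations (that $P(B_R(0))=2\pi R$ and that the scaling $x\mapsto Rx$, $y\mapsto Ry$ gives $V(B_R(0))=R^{4-\alpha}V(B_1(0))$) are the only genuinely new content either way. For what it is worth, your sketched Fourier/Hankel computation does close with the right constant: using $\widehat{\chi_{B_1}}(\xi)=2\pi J_1(|\xi|)/|\xi|$, the Riesz-kernel transform $\pi\, 2^{2-\alpha}\,\Gamma\bigl(1-\tfrac{\alpha}{2}\bigr)\Gamma\bigl(\tfrac{\alpha}{2}\bigr)^{-1}|\xi|^{\alpha-2}$ in $n=2$, and the Weber--Schafheitlin value
$\int_0^\infty J_1^2(t)\,t^{\alpha-3}\,dt=\Gamma(3-\alpha)\Gamma\bigl(\tfrac{\alpha}{2}\bigr)\big/\bigl(2^{3-\alpha}\,\Gamma^2\bigl(2-\tfrac{\alpha}{2}\bigr)\Gamma\bigl(3-\tfrac{\alpha}{2}\bigr)\bigr)$,
the recursions $\Gamma(3-\alpha)=(2-\alpha)\Gamma(2-\alpha)$ and $\Gamma\bigl(2-\tfrac{\alpha}{2}\bigr)=\bigl(1-\tfrac{\alpha}{2}\bigr)\Gamma\bigl(1-\tfrac{\alpha}{2}\bigr)$ reproduce exactly the coefficient $2\pi^2\Gamma(2-\alpha)\big/\bigl(\Gamma\bigl(2-\tfrac{\alpha}{2}\bigr)\Gamma\bigl(3-\tfrac{\alpha}{2}\bigr)\bigr)$ in \eqref{EBR}.
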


\noindent Next, we introduce the potential associated with the unit
disk:
\begin{align}
  \label{vBR}
  v^B(|x|) := \int_{B_1(0)} {1 \over |x - y|^\alpha} \, dy.
\end{align}
An explicit computation shows that \cite[Lemma 3.8]{km:cpam12}
  \begin{align}
    \label{vbr}
    v^B(r) =
    \begin{cases}
      \left( {\pi \over r^\alpha} \right) \,
      _2F_1\left(\frac{\alpha}{2}, \frac{\alpha}{2};
        2;\frac{1}{r^2}\right), & r \geq 1, \vspace{1mm} \\
      \vspace{1mm} \left( {2 \pi \over 2-\alpha} \right) \,
      _2F_1\left(\frac{\alpha-2}{2}, \frac{\alpha}{2}; 1; r^2\right),
      & r < 1.
    \end{cases}
  \end{align}
  where $_2F_1(a, b; c; z)$ is the hypergeometric function
  \cite{abramowitz}.  We also have the following useful properties of
  $v^B$.

\begin{lem}
  \label{l:vB}
  We have
 \begin{align}
    \label{v0}
    v^B(0) = {2 \pi \over 2 - \alpha}.
%     v^B(1) = \frac{\pi \Gamma (2-\alpha)}{\Gamma^2
%     \big(2-\frac{\alpha}{2}\big)}.
  \end{align}
  If also $\alpha < 1$, we have $v^B \in C^1([0, \infty))$ and
  \begin{align}
    \label{v1}
    \left| {dv^B (1) \over dr} \right| = \max_{r \geq 0} \left| {dv^B
        (r) \over dr} \right| = \frac{\pi \alpha (2 - \alpha) \Gamma(1
      - \alpha)}{2 \Gamma^2 (2 - \frac{\alpha}{2})}.
  \end{align}
\end{lem}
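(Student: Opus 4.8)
The plan is to prove the two assertions separately. For \eqref{v0} it suffices to evaluate the $r<1$ branch of \eqref{vbr} at $r=0$, where ${}_2F_1(a,b;c;0)=1$; equivalently one computes directly $v^B(0)=\int_{B_1(0)}|y|^{-\alpha}\,dy=2\pi\int_0^1\rho^{1-\alpha}\,d\rho=\tfrac{2\pi}{2-\alpha}$.

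For \eqref{v1} I would first differentiate both branches of \eqref{vbr} using the identity $\tfrac{d}{dz}{}_2F_1(a,b;c;z)=\tfrac{ab}{c}\,{}_2F_1(a+1,b+1;c+1;z)$. On the inner branch this yields
\[
  \frac{dv^B}{dr}(r) = -\pi\alpha\,r\,{}_2F_1\!\left(\tfrac{\alpha}{2},\tfrac{\alpha+2}{2};2;r^2\right), \qquad 0\le r<1,
\]
the constant prefactor collapsing because $\tfrac{\alpha-2}{2-\alpha}=-1$; on the outer branch, setting $s=1/r\in(0,1)$, it yields an expression of the shape
\[
  \frac{dv^B}{dr}(r) = -\pi\alpha\,s^{\alpha+1}\Big[\,{}_2F_1\!\left(\tfrac{\alpha}{2},\tfrac{\alpha}{2};2;s^2\right) + \tfrac{\alpha}{4}\,s^2\,{}_2F_1\!\left(\tfrac{\alpha+2}{2},\tfrac{\alpha+2}{2};3;s^2\right)\Big], \qquad r>1.
\]
In particular $\tfrac{dv^B}{dr}<0$ on $(0,\infty)$ and $\tfrac{dv^B}{dr}(0)=0$. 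Since $\alpha<1$, all the hypergeometric series appearing above converge at argument $1$, so each of the two right-hand sides, up to its overall sign, is a convergent series of nonnegative powers of $r$ (respectively $s$) with nonnegative coefficients on $[0,1]$, hence nondecreasing there; consequently $\big|\tfrac{dv^B}{dr}\big|$ is nondecreasing on $[0,1]$ and nonincreasing on $[1,\infty)$. Granted continuity of $\tfrac{dv^B}{dr}$ at $r=1$, this shows $\max_{r\ge0}\big|\tfrac{dv^B}{dr}(r)\big|$ is attained at $r=1$.

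The remaining step — matching the two branches at $r=1$ and reading off the common value — is the one I expect to be the main obstacle, since the two pieces of \eqref{vbr} look structurally quite different. Here I would invoke Gauss's summation formula ${}_2F_1(a,b;c;1)=\tfrac{\Gamma(c)\Gamma(c-a-b)}{\Gamma(c-a)\Gamma(c-b)}$, which is legitimate precisely because $\alpha<1$. The limit $r\to1^-$ of the inner expression is $-\pi\alpha\,{}_2F_1(\tfrac\alpha2,\tfrac{\alpha+2}{2};2;1)$, which reduces via $\Gamma(z+1)=z\Gamma(z)$ to $-\tfrac{\pi\alpha(2-\alpha)\Gamma(1-\alpha)}{2\,\Gamma^2(2-\alpha/2)}$, while the limit $r\to1^+$ of the outer expression, evaluated at $s=1$ with the same formula, reduces — again using $\Gamma(2-\alpha)=(1-\alpha)\Gamma(1-\alpha)$ — to exactly the same quantity. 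This proves $v^B$ is $C^1$ across $r=1$; since each branch is smooth in $r$ off $r=1$ (the inner one being a convergent power series in $r^2$, which also gives $v^B\in C^\infty$ near $0$ with $\tfrac{dv^B}{dr}(0)=0$), we conclude $v^B\in C^1([0,\infty))$ and obtain \eqref{v1}. A way to bypass the explicit matching would be to note that for $\alpha<1$ the integral $\nabla v^B(x)=-\alpha\int_{B_1(0)}\tfrac{x-y}{|x-y|^{\alpha+2}}\,dy$ converges absolutely and depends continuously on $x$, so that $v^B\in C^1$ a priori; one then only needs the simpler interior computation to identify $\tfrac{dv^B}{dr}(1)$.
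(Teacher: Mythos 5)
Your proof is correct, and it reaches the conclusion by a route that is partly parallel to and partly more self-contained than the paper's. For locating the maximum, the paper differentiates \eqref{vbr} \emph{twice}, reads off from the explicit formulas that $v^B$ is concave on $(0,1)$ and convex on $(1,\infty)$, and combines this with $dv^B(0)/dr = dv^B(\infty)/dr = 0$ to conclude that $dv^B/dr$ is negative everywhere and minimized at $r=1$; you differentiate once and observe that $|dv^B/dr|$, expressed in $r$ (resp.\ $s=1/r$), is a convergent series with nonnegative coefficients on $[0,1]$, hence monotone on each side of $r=1$. These are essentially equivalent observations (the paper's second-derivative expressions are single-signed for the same positivity reason), so this part is a parallel argument. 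The genuine difference is in the remaining claims: the paper simply cites \cite[Lemma 3.8]{km:cpam12} both for $v^B\in C^1$ and for the value of $|dv^B(1)/dr|$, whereas you derive both from scratch. Your computations check out: the inner-branch derivative is $-\pi\alpha r\,{}_2F_1(\tfrac{\alpha}{2},\tfrac{\alpha+2}{2};2;r^2)$, Gauss's formula applies on both sides precisely because $c-a-b=1-\alpha>0$, and the reductions via $\Gamma(2-\alpha)=(1-\alpha)\Gamma(1-\alpha)$ and $\Gamma(1-\tfrac{\alpha}{2})=\tfrac{2}{2-\alpha}\Gamma(2-\tfrac{\alpha}{2})$ give the stated common value $\tfrac{\pi\alpha(2-\alpha)\Gamma(1-\alpha)}{2\Gamma^2(2-\alpha/2)}$ from either branch. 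Your fallback argument — that $\nabla v^B(x)=-\alpha\int_{B_1(0)}(x-y)|x-y|^{-\alpha-2}\,dy$ converges absolutely and depends continuously on $x$ when $\alpha<1$, so $v^B\in C^1$ a priori — is the cleanest way to dispose of the regularity claim and makes the branch-matching a consistency check rather than a necessity. The net effect is a self-contained proof where the paper's relies on an external reference.
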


\begin{proof}
  The formula in \eqref{v0} follows from \eqref{vbr} by noting that
  $_2F_1\left(\frac{\alpha-2}{2}, \frac{\alpha}{2}; 1; 0 \right) =
  1$. Smoothness of $v^B$ follows from \cite[Lemma 3.8]{km:cpam12}.
  Finally, to obtain \eqref{v1}, we differentiate the formula in
  \eqref{vbr} twice with respect to $r$. We get for $r > 1$:
  \begin{multline}
    {d^2 v^B \over dr^2} = \frac{1}{4} \pi \alpha r^{-\alpha-4} \\
    \times \left(4 (\alpha+1) r^2 \,
      _2F_1\left(\frac{\alpha}{2},\frac{\alpha+2}{2};2;\frac{1}{r^2}\right)+\alpha
      (\alpha+2) \,
      _2F_1\left(\frac{\alpha}{2}+1,\frac{\alpha}{2}+2;3;\frac{1}{r^2}\right)\right),
  \end{multline}
  and for $r < 1$:
  \begin{multline}
    {d^2 v^B \over dr^2} = -\frac{1}{4} \pi \alpha \left(\alpha
      (\alpha+2) r^2 \,
      _2F_1\left(\frac{\alpha}{2}+1,\frac{\alpha}{2}+2;3;r^2\right)+4
      \,
      _2F_1\left(\frac{\alpha}{2},\frac{\alpha+2}{2};2;r^2\right)\right).
  \end{multline}
  An inspection of these formulas shows that $v^B(r)$ is a concave
  function for $r < 1$ and a convex function for $r > 1$. Therefore,
  since $dv^B(0)/dr = 0$ and $dv^B(\infty)/dr = 0$, the derivative of
  $v^B(r)$ is negative for all $r > 0$ and reaches its absolute
  minimum at $r = 1$. The second equality in \eqref{v1} again follows
  from \cite[Lemma 3.8]{km:cpam12}.
\end{proof}

Finally, we introduce the isoperimetric deficit of a measurable set $F
\subset \mathbb R^2$:
\begin{align}
  \label{DF}
  D(F) := {P(F) \over \sqrt{4 \pi |F|}} - 1.
\end{align}
The following quantitative version of the isoperimetric inequality due
to Bonnesen will be useful \cite{bonnesen24} (see also
\cite{burago,osserman79,fuglede91}).

\begin{lem}[\cite{bonnesen24} ]
  \label{l:bonnesen}
  Let $F \subset \mathbb R^2$ be a convex open set which is
  bounded. Then there exists $x_0 \in \mathbb R^2$ and $r_1, r_2$
  satisfying $0 < r_1 \leq r_2$ such that $B_{r_1}(x_0) \subseteq F
  \subseteq B_{r_2}(x_0)$ and
  \begin{align}
    \label{eq:bonnesen}
    {(r_2 - r_1)^2 \over |F|} \leq (2 +D(F)) D(F) .
  \end{align}
\end{lem}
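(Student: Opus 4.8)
The plan is to peel off the algebra first and then recall Bonnesen's classical proof.

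\emph{Reduction.} Since $1+D(F)=P(F)/\sqrt{4\pi|F|}$, one has the identity $(2+D(F))D(F)=(1+D(F))^2-1=P(F)^2/(4\pi|F|)-1$, so \eqref{eq:bonnesen} is equivalent to the annular Bonnesen inequality
\[
4\pi\,(r_2-r_1)^2\;\le\;P(F)^2-4\pi|F| .
\]
The right-hand side is nonnegative by the isoperimetric inequality, and degenerate convex sets (segments, points) are handled by approximation, so we may assume $F$ is a bounded, nondegenerate, open convex set.

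\emph{The kinematic estimate.} For $z\in\RRR$ put $\rho_+(z):=\max_{y\in\overline{F}}|z-y|$. If $\operatorname{dist}(z,\partial F)<\rho<\rho_+(z)$ then the circle $\partial B_\rho(z)$ meets $\partial F$ in at least two points, since it carries both a point of $F$ (at distance $\rho$ from $z$ toward the farthest point of $\overline{F}$, which lies in $F$ by convexity) and a point of $\RRR\setminus\overline{F}$ (toward the nearest point of $\partial F$), while $\partial F$ separates these. By Poincar\'e's kinematic formula for the curves $\partial F$ and a circle of radius $\rho$ one has $\int_{\RRR}\#\bigl(\partial F\cap\partial B_\rho(z)\bigr)\,dz=4\rho\,P(F)$, so the set $R_\rho$ of admissible centres above satisfies $|R_\rho|\le 2\rho\,P(F)$ for every $\rho>0$. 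Writing $R_\rho=\{z:\operatorname{dist}(z,\partial F)<\rho\}\setminus G_\rho$ with $G_\rho:=\bigcap_{y\in\overline{F}}\overline{B_\rho(y)}$, and using the Steiner formula $|\{z:\operatorname{dist}(z,F)<\rho\}|=|F|+P(F)\rho+\pi\rho^2$ (exact for convex $F$) together with the inner parallel body $F_{-\rho}:=\{z:B_\rho(z)\subseteq F\}$, this becomes
\[
\pi\rho^2-P(F)\rho+|F|\;\le\;|F_{-\rho}|+|G_\rho|\qquad\text{for all }\rho>0 .
\]

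\emph{Choosing the annulus.} Let $x_0$ be an interior point of $F$ minimising $\rho_+(z)-\operatorname{dist}(z,\partial F)$, and set $r_1:=\operatorname{dist}(x_0,\partial F)$ and $r_2:=\rho_+(x_0)$, so that $B_{r_1}(x_0)\subseteq F\subseteq B_{r_2}(x_0)$ (the second inclusion is automatic, $F$ being open). Then $r_1\le r_{\mathrm{in}}\le r_{\mathrm{out}}\le r_2$, where $r_{\mathrm{in}}$ is the inradius and $r_{\mathrm{out}}$ the radius of the smallest disk containing $F$; since $F_{-\rho}=\varnothing=G_\rho$ for $\rho\in(r_{\mathrm{in}},r_{\mathrm{out}})$, the last display gives $\pi\rho^2-P(F)\rho+|F|\le0$ there, hence $[r_{\mathrm{in}},r_{\mathrm{out}}]$ lies between the roots of that quadratic and $\pi^2(r_{\mathrm{out}}-r_{\mathrm{in}})^2\le P(F)^2-4\pi|F|$ --- the non-concentric form of Bonnesen's inequality.

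\emph{Main obstacle.} Upgrading this to the stated concentric annulus centred at $x_0$ with the \emph{sharp} constant $4\pi$ --- which, since $4\pi>\pi^2$, is strictly stronger --- is the real content of Bonnesen's theorem: it requires exploiting the kinematic inequality also at radii $\rho\notin[r_{\mathrm{in}},r_{\mathrm{out}}]$ in combination with Brunn--Minkowski bounds for $|F_{-\rho}|$ and $|G_\rho|$ and with the extremality of $x_0$, together with care over non-smooth $\partial F$ and degenerate configurations. This sharpening, not the mechanics above, is the one genuinely delicate step; as the statement is classical, an acceptable alternative is simply to invoke \cite{bonnesen24} (or the expositions \cite{burago,osserman79,fuglede91}) for the annular inequality and retain only the algebraic reduction.
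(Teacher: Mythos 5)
The paper gives no proof of this lemma at all: it is quoted as a classical result and discharged entirely by the citation to \cite{bonnesen24} (with \cite{burago,osserman79,fuglede91} as further references), so the fallback you propose at the end is exactly what the authors do, and it is the appropriate resolution. Your algebraic reduction is correct: since $(2+D(F))D(F)=(1+D(F))^2-1=P(F)^2/(4\pi|F|)-1$, the claim is precisely $4\pi(r_2-r_1)^2\le P(F)^2-4\pi|F|$ for a \emph{concentric} pair $B_{r_1}(x_0)\subseteq F\subseteq B_{r_2}(x_0)$. Your kinematic argument (Poincar\'e's formula for circles against $\partial F$, combined with the Steiner formula and the vanishing of $F_{-\rho}$ and $G_\rho$ for $\rho$ between the inradius and circumradius) is sound as far as it goes and yields the classical bound $\pi^2(r_{\mathrm{out}}-r_{\mathrm{in}})^2\le P(F)^2-4\pi|F|$.

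As a proof of the stated lemma, however, this leaves a genuine gap --- which you flag honestly but do not close --- and it is worth noting that the gap is twofold and both parts go in the unfavourable direction. First, the constant: $4\pi>\pi^2$, so the target is strictly stronger than what the quadratic-roots argument delivers. Second, the quantity being controlled: for any admissible centre one has $r_1\le r_{\mathrm{in}}$ and $r_2\ge r_{\mathrm{out}}$, hence $r_2-r_1\ge r_{\mathrm{out}}-r_{\mathrm{in}}$, so a bound on $r_{\mathrm{out}}-r_{\mathrm{in}}$ does not bound the width of any concentric annulus. Closing both is the real content of Bonnesen's theorem (it requires using the kinematic inequality also at radii $\rho\notin[r_{\mathrm{in}},r_{\mathrm{out}}]$, where $F_{-\rho}$ and $G_\rho$ are nonempty, together with a suitable choice of centre), and your sketch does not carry this out. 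Since the result is classical and the paper itself only cites it, the clean course is to keep your reduction, drop the incomplete derivation, and invoke \cite{bonnesen24} for the full statement.
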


\noindent Note that for $D(F) \ll 1$ the constant in the right-hand
side of \eqref{eq:bonnesen} is optimal
\cite{bonnesen24,burago,osserman79,fuglede91}.

\section{An upper bound for the minimal energy}
\label{sec:Ebound}

In this section, we derive an ansatz-based upper bound for the minimal
energy scaling linearly with $m$ for large $m$, which will be useful
in a number of proofs.  Our ansatz consists of $n$ disks of equal
mass, spaced arbitrarily far apart.  We choose $n$ as a function of
$m$ to optimize our bound.

For now, it is convenient to work in terms of $R:=\sqrt{m/\pi}$.  We
shall switch back to $m$ in the final step.  Also, we shall focus on
bounding the energy per unit area; this will then yield a
corresponding energy bound in terms of $m$. We define the constant
\begin{align}
  V_0(\alpha) &:= V(B_1(0)) =
  \frac{2\pi^2\Gamma(2-\alpha)}{\Gamma(2-\frac{\alpha}{2})
    \Gamma(3-\frac{\alpha}{2})},
  \label{eq:C0}
\end{align}
which is just the potential energy of a unit ball.  Then
$E_1(R):=2{\pi}R+V_0(\alpha)R^{4-\alpha}$ denotes the energy of one
disk of radius $R$ by Lemma \ref{l:BE}.  We denote the infimum of the
energy obtained by splitting the mass $m={\pi}R^2$ into $n$ disks of
equal radius by $E_n(R):=n E_1(R / \sqrt{n})$. Here we noted that the
non-local interaction between these disks can be made arbitrarily
small by translating the disks sufficiently far apart.  Finally, we
define the corresponding energy per unit area
$\rho_n(R):=E_n(R)/({\pi}R^2)$.  For notational convenience we let
$\rho(R):=\rho_1(R)=E_1(R)/({\pi}R^2)$.  Note that
\begin{equation}\label{eq:rho_n}
  \rho_n(R)=nE_1(R/\sqrt{n})/({\pi}R^2)=\rho(R/\sqrt{n}).  
\end{equation}

To find our upper bound, we characterize the envelope of the graphs of
the sequence of functions which are appropriate dilations of
$\rho(R)$, the energy per unit area of a single disk of radius $R$.
Three of these functions are illustrated in Figure \ref{fig:rho}. From
this figure, one may suspect that the envelope can be determined
completely by locating the intersections of the adjacent graphs. This
is proved in the following sequence of lemmas, which are intended to
deal with the elementary, but rather tedious algebra involved.

\begin{figure}[t]
\begin{center}
  \includegraphics[width=4in]{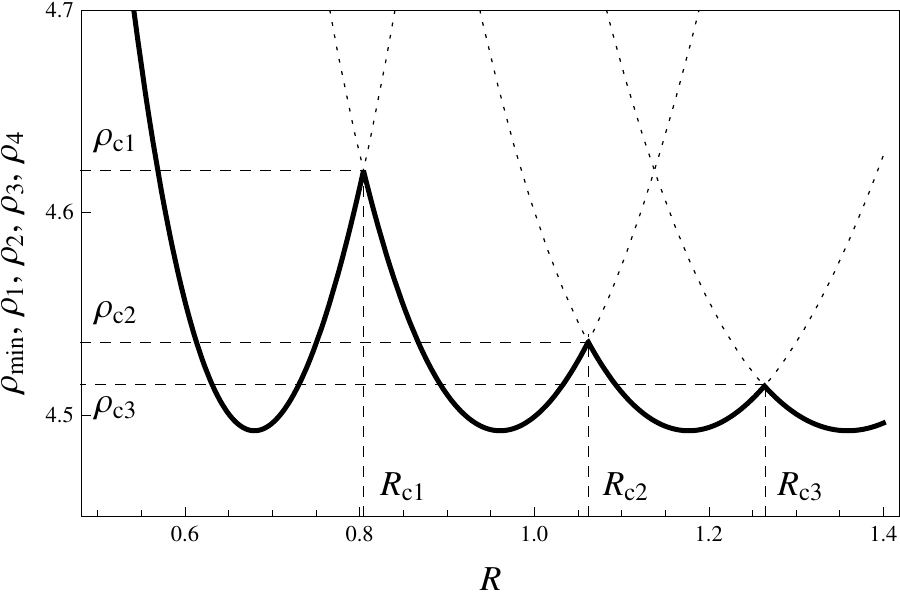}
\end{center}
\caption{The energy per unit area $\rho_n(R)$ for $n=1, 2, 3$ (dotted
  lines), along with the intersection points $(R_{cn}, \rho_{cn})$ and
  the minimum $\rhom(R)$ (solid line) for $\alpha = 0.1$.}
\label{fig:rho}
\end{figure}

%%%%%%%% Finding Rcn
%%%%%%%%

\begin{lem}\label{lem:r_cn}
  For $n=1,2,\ldots$, the equation $\rho_n(R)=\rho_{n+1}(R)$ has a
  unique positive solution given by
\begin{align*}
  R_{cn} &:=\left( \frac{2\pi \left( \sqrt{n+1} - \sqrt{n}
      \right)}{V_0 \left( n^{{\alpha \over 2}-1} - (n+1)^{{\alpha
            \over 2}-1} \right)}\right)^{\frac{1}{3-\alpha}}.
\end{align*}
\end{lem}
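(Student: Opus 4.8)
The plan is to reduce the equation $\rho_n(R)=\rho_{n+1}(R)$ to a single algebraic equation in $R$ and solve it explicitly, then verify uniqueness among positive reals. First I would use the scaling identity \eqref{eq:rho_n}, namely $\rho_n(R)=\rho(R/\sqrt n)$, together with the explicit formula $E_1(R)=2\pi R+V_0 R^{4-\alpha}$ from Lemma~\ref{l:BE}, to write
\begin{align*}
  \rho_n(R)=\frac{E_n(R)}{\pi R^2}=\frac{n E_1(R/\sqrt n)}{\pi R^2}
  =\frac{2\pi}{\pi R^2}\cdot\frac{n R}{\sqrt n}+\frac{V_0}{\pi R^2}\cdot n\left(\frac{R}{\sqrt n}\right)^{4-\alpha}
  =\frac{2\sqrt n}{R}+\frac{V_0}{\pi}\,n^{\frac{\alpha}{2}-1}R^{2-\alpha}.
\end{align*}
So $\rho_n$ has the form $a_n R^{-1}+b_n R^{2-\alpha}$ with $a_n=2\sqrt n$ and $b_n=\tfrac{V_0}{\pi}n^{\frac{\alpha}{2}-1}$. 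Setting $\rho_n(R)=\rho_{n+1}(R)$ then gives $(a_n-a_{n+1})R^{-1}=(b_{n+1}-b_n)R^{2-\alpha}$, i.e.
\begin{align*}
  R^{3-\alpha}=\frac{a_n-a_{n+1}}{b_{n+1}-b_n}=\frac{2(\sqrt n-\sqrt{n+1})}{\tfrac{V_0}{\pi}\left((n+1)^{\frac{\alpha}{2}-1}-n^{\frac{\alpha}{2}-1}\right)}
  =\frac{2\pi(\sqrt{n+1}-\sqrt n)}{V_0\left(n^{\frac{\alpha}{2}-1}-(n+1)^{\frac{\alpha}{2}-1}\right)},
\end{align*}
where in the last step I multiplied numerator and denominator by $-1$. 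Taking the $(3-\alpha)$-th root yields exactly the claimed formula for $R_{cn}$.

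For existence and uniqueness of a \emph{positive} solution, it suffices to observe that the right-hand side of the equation $R^{3-\alpha}=\frac{a_n-a_{n+1}}{b_{n+1}-b_n}$ is strictly positive: the numerator $a_n-a_{n+1}=2(\sqrt n-\sqrt{n+1})<0$, and the denominator $b_{n+1}-b_n=\tfrac{V_0}{\pi}\big((n+1)^{\frac{\alpha}{2}-1}-n^{\frac{\alpha}{2}-1}\big)<0$ as well, since $V_0>0$ and $t\mapsto t^{\frac{\alpha}{2}-1}$ is strictly decreasing on $(0,\infty)$ for $\alpha<2$ (which holds here as $\alpha\le 0.034$). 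Hence the ratio is a positive real number, and since $3-\alpha>0$, the map $R\mapsto R^{3-\alpha}$ is a strictly increasing bijection of $(0,\infty)$ onto itself; therefore there is exactly one positive $R_{cn}$.

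I do not anticipate a genuine obstacle: the only thing to be careful about is bookkeeping of signs when clearing denominators (both $a_n-a_{n+1}$ and $b_{n+1}-b_n$ are negative, so their ratio is positive, as needed for the $(3-\alpha)$-th root to make sense over the reals), and confirming the monotonicity of $t^{\frac{\alpha}{2}-1}$ that drives the sign of the denominator. Everything else is the elementary algebra of combining the two power-law terms, which the scaling relation \eqref{eq:rho_n} makes transparent.
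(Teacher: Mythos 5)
Your proof is correct and follows essentially the same route as the paper, which simply solves the equivalent equation $E_n(R)=E_{n+1}(R)$ for $R$; you merely spell out the algebra and the sign/uniqueness check (both of which are right, since $\alpha<2$ throughout) in more detail.
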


\begin{proof}
  Since $R > 0$, we can solve the equivalent equation
  $E_n(R)=E_{n+1}(R)$. Solving
\begin{align*}
  n \left[ \frac{2\pi R}{\sqrt{n}} + V_0 \left( \frac{R}{\sqrt{n}}
    \right)^{4-\alpha} \right] = (n+1) \left[ \frac{2\pi
      R}{\sqrt{n+1}} + V_0 \left( \frac{R}{\sqrt{n+1}}
    \right)^{4-\alpha} \right]
\end{align*}
for $R$ gives the result.
\end{proof}

%%%%%% Intersection Theorem
\begin{lem}\label{lem:intersection}

  Suppose the four functions $f_1, f_2, g_1, g_2 \in C^1([0,1])$
  satisfy
\begin{enumerate}
\item $f_i(0)=g_i(1)=0$ for $i=1, 2$;
\item $0<f_1 \sprime (x)<f_2 \sprime (x) \quad \forall x \in (0,1)$;
\item $0>g_1 \sprime (x)>g_2 \sprime (x) \quad \forall x \in (0,1)$.
\end{enumerate}
Then for $i=1,2$ there exist unique $x_i \in (0,1)$ such that
$f_i(x_i)=g_i(x_i)=:y_i$, with $y_1<y_2$.
\end{lem}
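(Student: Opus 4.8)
The plan is to recast both assertions in terms of the auxiliary functions $h_i := f_i - g_i \in C^1([0,1])$ and two elementary integral comparisons. First I would extract from the hypotheses the sign information used throughout: since $g_i' < 0$ on $(0,1)$ and $g_i(1) = 0$, each $g_i$ is strictly decreasing on $[0,1]$, so $g_i > 0$ on $[0,1)$ and in particular $g_i(0) > 0$; similarly $f_i' > 0$ together with $f_i(0) = 0$ makes $f_i$ strictly increasing with $f_i > 0$ on $(0,1]$. Integrating $f_2' - f_1' > 0$ from $0$ gives $f_2(x) > f_1(x)$ for all $x \in (0,1]$, and integrating $g_1' - g_2' > 0$ backwards from $1$ (using $g_i(1) = 0$, so $g_i(x) = -\int_x^1 g_i'$) gives $g_2(x) > g_1(x)$ for all $x \in [0,1)$.

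For existence and uniqueness of $x_i$, note $h_i' = f_i' - g_i' > 0$ on $(0,1)$, so $h_i$ is strictly increasing on $[0,1]$; moreover $h_i(0) = -g_i(0) < 0$ and $h_i(1) = f_i(1) > 0$. By the intermediate value theorem $h_i$ has exactly one zero $x_i \in (0,1)$, and $h_i(x_i) = 0$ is precisely the asserted equality $f_i(x_i) = g_i(x_i) =: y_i$.

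For the ordering $y_1 < y_2$ I would argue by a dichotomy on the (a priori unknown) relative position of $x_1$ and $x_2$. If $x_2 \ge x_1$, then using that $f_2$ is increasing and $f_2 > f_1$ on $(0,1]$ one gets $y_2 = f_2(x_2) \ge f_2(x_1) > f_1(x_1) = y_1$. If instead $x_2 < x_1$, then using that $g_2$ is decreasing and $g_2 > g_1$ on $[0,1)$ one gets $y_2 = g_2(x_2) > g_2(x_1) > g_1(x_1) = y_1$. In either case $y_1 < y_2$. The only real subtlety — and the step I would flag as the main obstacle — is precisely this last dichotomy: one cannot hope for a fixed order between $x_1$ and $x_2$ (already for affine $f_i,g_i$ one can have $x_1 = x_2$, and small perturbations produce either strict order), so the conclusion must be obtained by invoking the $f$-comparison on one branch and the $g$-comparison on the other; each branch individually is immediate once the monotonicity and comparison facts of the first paragraph are in hand.
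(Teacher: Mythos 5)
Your proof is correct, and the existence/uniqueness part (monotone $h_i=f_i-g_i$ with a sign change) is exactly what the paper does. For the ordering $y_1<y_2$, however, you take a genuinely different and more direct route. The paper introduces the auxiliary intersection $(x_3,y_3)$ of the ``mixed'' pair $(f_1,g_2)$, compares $h_3=f_1-g_2$ with $h_2=f_2-g_2$ (equal at $0$, $h_3'<h_2'$) to get $x_3>x_2$ and hence $y_3<y_2$ via the monotonicity of $g_2$, and then symmetrically compares $(f_1,g_2)$ with $(f_1,g_1)$ to get $y_1<y_3$, concluding $y_1<y_3<y_2$. You instead split on the a priori unknown order of $x_1$ and $x_2$ and, on each branch, invoke one of the pointwise comparisons $f_2>f_1$ on $(0,1]$ or $g_2>g_1$ on $[0,1)$ together with the relevant monotonicity; your observation that the order of $x_1,x_2$ is not determined by the hypotheses, so a case split is forced, is accurate. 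Your argument is shorter and avoids the auxiliary root-location step ($x_3>x_2$, etc.); the paper's chain through the mixed pair is essentially the same comparison machinery it reuses in Lemma 3.4 (the ``slide'' lemma), which is presumably why it is phrased that way. Both are complete; no gap.
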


\begin{proof}
  Existence of unique $x_i$ for $i = 1,2$ follows from applying the
  intermediate value theorem to $h_i:=f_i-g_i$.  To
  prove % The h_i are needed for later
  $y_1<y_2$, first consider $f_1(x)$ and $g_2(x)$.  These also have a
  unique intersection point; call it $(x_3,y_3)$. Define
  $h_3:=f_1-g_2$.  Then $h_3(0)=-g_2(0)=h_2(0)$, and $0<h_3 \sprime =
  f_1 \sprime - g_2 \sprime < f_2 \sprime - g_2 \sprime =h_2 \sprime$.
  So for $x \in (0,1)$, $h_2$ and $h_3$ are both increasing and
  $h_3<h_2$.  So $x_3$, the root of $h_3$, satisfies $x_3>x_2$.  Since
  $g_2$ is decreasing, we have $g_2(x_3)<g_2(x_2)$, which implies that
  $y_3<y_2$.  Similarly, by comparing the intersection of $f_1$ and
  $g_2$ to that of $f_1$ and $g_1$, we can show that $y_1<y_3$.
  Combining with the above, we get $y_1<y_2$.
\end{proof}

%%%%%%%%%%%% Sliding closer lowers the root
%%%%%%%%%%%%
\begin{lem}\label{cor:slide}
  Let $f_1$, $g_1$ and $y_1$ be as in Lemma \ref{lem:intersection},
  and suppose $g_3(x)=g_1(x+a)$, where $0<a<1$. Then the unique
  solution $x_4 \in (0,1)$ of $f_1(x_4) = g_3(x_4) =: y_4$ satisfies
  $y_4<y_1$.
\end{lem}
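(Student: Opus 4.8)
The plan is to exploit the monotonicity already built into the hypotheses of Lemma~\ref{lem:intersection}, viewing the shift $g_3(x) = g_1(x+a)$ as a downward perturbation of $g_1$ that drags the intersection with $f_1$ to the left. First I would record the shapes of the graphs. Since $f_1(0)=0$ and $f_1' > 0$ on $(0,1)$, the function $f_1$ is strictly increasing and positive on $(0,1]$; since $g_1(1)=0$ and $g_1'<0$ on $(0,1)$, the function $g_1$ is strictly decreasing and positive on $[0,1)$. Hence $g_3$ is well defined, strictly decreasing and nonnegative on $[0,1-a]$, with $g_3(1-a)=g_1(1)=0$, and the crucial pointwise inequality $g_3(x) = g_1(x+a) < g_1(x)$ holds for every $x \in [0,1-a]$ simply because $g_1$ is strictly decreasing and $x+a>x$.

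Second, I would establish that $x_4$ exists, is unique, and lies in $(0,1-a)\subset(0,1)$. Setting $h_4 := f_1 - g_3$, one has $h_4(0) = -g_1(a) < 0$ and $h_4(1-a) = f_1(1-a) > 0$, while $h_4'(x) = f_1'(x) - g_1'(x+a) > 0$ on $(0,1-a)$ because $f_1'>0$ there and $g_1'(x+a)<0$ for $x+a\in(a,1)$. So $h_4$ is strictly increasing, and the intermediate value theorem gives a unique root $x_4 \in (0,1-a)$.

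Third — the heart of the matter — I would compare $x_4$ with the root $x_1$ of $h_1 := f_1 - g_1$ furnished by Lemma~\ref{lem:intersection}. If $x_1 \ge 1-a$, then trivially $x_4 < 1-a \le x_1$. If instead $x_1 < 1-a$, then the pointwise bound $g_3 < g_1$ gives $h_4(x_1) = f_1(x_1) - g_3(x_1) > f_1(x_1) - g_1(x_1) = 0$, and since $h_4$ is strictly increasing with $h_4(x_4)=0$ we again get $x_4 < x_1$. In both cases $x_4 < x_1$, whence strict monotonicity of $f_1$ yields $y_4 = f_1(x_4) < f_1(x_1) = y_1$, as claimed.

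I do not expect a genuine obstacle here; the one point demanding a little care is the bookkeeping on domains, since $g_3$ is only defined on $[0,1-a]$, so one must either check $x_1 \le 1-a$ or, as above, peel off the case $x_1 > 1-a$ before invoking the pointwise comparison. Everything else is a direct transcription of the monotonicity hypotheses, exactly as in the proof of Lemma~\ref{lem:intersection}.
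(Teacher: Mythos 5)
Your proof is correct and is essentially the argument the paper intends: the paper's own proof is just the one-line remark that it is ``analogous to that of Lemma~\ref{lem:intersection},'' and your comparison of the difference functions $h_4 = f_1 - g_3$ and $h_1 = f_1 - g_1$ via the pointwise bound $g_3(x) = g_1(x+a) < g_1(x)$, followed by monotonicity of $f_1$, is exactly that analogy carried out. Your care with the domain of $g_3$ (splitting off the case $x_1 \ge 1-a$) addresses a detail the paper silently glosses over.
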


\begin{proof}
  Analogous to that of Lemma \ref{lem:intersection}.
\end{proof}

% \begin{proof}
%   We know that $f_1(x_1)=g_1(x_1)$.  Since $f_1$ increases, we have
%   $g_3(x_1-a)=g_1(x_1)=f_1(x_1)>f_1(x_1-a)$.  So the new intersection
%   point lies to the right of $x=x_1-a$, and hence, below
%   $y=g_3(x_1-a)=g_1(x_1)=y_1$.
% \end{proof}

%%%%%% Scaling Strictly Convex Functions (actually functions with f''>0)
\begin{lem}\label{lem:scale}
  Suppose $f_1 \in C^2(\RR^+)$ satisfies $f_1 \dprime >0$ and attains
  its minimum at $x_0$. Let $f_2(x):=f_1(x/a)$, where $a>1$.  Then
\begin{enumerate}[(i)]
\item $f_2$ satisfies $f_2  \dprime >0$ and attains its minimum at $ax_0$.
\item There is a unique $x_1 \in (x_0, ax_0)$ such that
  $f_1(x_1)=f_2(x_1)$.
\item For all $x<x_1$ we have $f_1(x) < f_2(x)$ and for all $x>x_1$ we
  have $f_1(x) > f_2(x)$.
\item Define a new function $f_3(x):=f_2\left(x+(a-1)x_0 \right)$
  which is $f_2$ shifted so its minimum coincides with that of $f_1$.
  Then $|f_3 \sprime| \leq |f_1 \sprime|$, with equality only when
  $x=x_0$.
\end{enumerate}
\end{lem}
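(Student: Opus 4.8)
The plan is to prove the four claims of Lemma \ref{lem:scale} in order, exploiting the simple change-of-variables relation $f_2(x) = f_1(x/a)$ and the strict convexity of $f_1$.

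For (i), I would note that $f_2'(x) = \tfrac1a f_1'(x/a)$ and $f_2''(x) = \tfrac{1}{a^2} f_1''(x/a) > 0$ since $f_1'' > 0$; and $f_2'(x) = 0$ iff $f_1'(x/a) = 0$ iff $x/a = x_0$ iff $x = a x_0$, using that the minimum of a strictly convex $C^2$ function is the unique critical point. Since $a > 1$ and $x_0 > 0$, we have $a x_0 > x_0$. For (ii) and (iii), consider $h := f_1 - f_2$. Then $h(x_0) = f_1(x_0) - f_2(x_0) = f_1(x_0) - f_1(x_0/a) < 0$, since $x_0/a < x_0$ lies between the minimizer and $x_0$... actually more simply, $f_1(x_0) \le f_1(x_0/a)$ with equality ruled out since $x_0/a \ne x_0$ and $f_1$ is strictly convex, so $h(x_0) < 0$. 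Similarly $h(a x_0) = f_1(a x_0) - f_1(x_0) > 0$. Moreover $h'(x) = f_1'(x) - \tfrac1a f_1'(x/a)$; on the interval $(x_0, a x_0)$ I would argue $f_1' > 0$ (since we are to the right of the minimizer) while $f_1'(x/a)$ could have either sign, but a cleaner route is: $h$ changes sign on $(x_0, a x_0)$, so a root $x_1$ exists there; uniqueness on all of $\RR^+$ follows because $h' = f_1'(x) - \tfrac1a f_1'(x/a)$ and one checks $h$ is strictly monotone near any root, or better, one shows directly that $f_1(x) - f_2(x)$ has exactly one sign change. Claim (iii) is then just the statement that $h < 0$ before $x_1$ and $h > 0$ after, which is where the bulk of the care goes: I would establish this by showing $h$ has no other zeros, e.g. by noting that for $x < x_0$ both $x$ and $x/a$ are left of the minimizer where $f_1$ is decreasing and $x/a < x$ gives $f_1(x/a) > f_1(x)$ so $h < 0$; for $x > a x_0$ both arguments are right of the minimizer where $f_1$ is increasing and $x > x/a$ gives $f_1(x) > f_1(x/a)$ so $h > 0$; and on $(x_0, a x_0)$ strict convexity forces $h$ to cross zero exactly once — this last point is the one requiring an argument, probably via $\tfrac{d}{dx}\big(f_1(x)-f_1(x/a)\big)$ and a convexity/monotonicity comparison of $f_1'(x)$ versus $f_1'(x/a)$.

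For (iv), the shifted function is $f_3(x) = f_2(x + (a-1)x_0) = f_1\big((x + (a-1)x_0)/a\big)$, so that $f_3'(x) = \tfrac1a f_1'\big(\tfrac{x + (a-1)x_0}{a}\big)$. Writing $\xi := \tfrac{x + (a-1)x_0}{a} = \tfrac{x - x_0}{a} + x_0$, we see $\xi$ is an affine contraction (factor $1/a < 1$) toward $x_0$ of the point $x$; that is, $\xi$ lies strictly between $x_0$ and $x$ for $x \ne x_0$, and $\xi = x_0$ when $x = x_0$. The inequality $|f_3'(x)| \le |f_1'(x)|$ then reads $\tfrac1a|f_1'(\xi)| \le |f_1'(x)|$. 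Since $f_1$ is strictly convex, $f_1'$ is strictly increasing, and it vanishes only at $x_0$; hence $|f_1'|$ is strictly increasing in the distance from $x_0$. Because $|\xi - x_0| = \tfrac1a|x - x_0| \le |x - x_0|$, we get $|f_1'(\xi)| \le |f_1'(x)|$, and combined with the extra factor $\tfrac1a \le 1$ this gives $|f_3'(x)| = \tfrac1a |f_1'(\xi)| \le |f_1'(x)|$, with equality forcing both $\tfrac1a|f_1'(\xi)| = |f_1'(\xi)|$ (impossible unless $f_1'(\xi)=0$, i.e. $\xi = x_0$) — so equality holds only at $x = x_0$, as claimed.

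The main obstacle I anticipate is part (iii) — specifically, ruling out extra crossings of $h = f_1 - f_2$ outside the bracketing interval and, even inside $(x_0, a x_0)$, showing the crossing is unique. The slick argument is to observe that $h(x) = f_1(x) - f_1(x/a)$ and that the sign of $h$ is governed entirely by whether the point farther from $x_0$ is $x$ or $x/a$: since $|x/a - x_0|$ versus $|x - x_0|$ — this is not always a clean comparison when $x$ and $x/a$ straddle $x_0$ — one must treat the regime $x_0/a \le x \le $ (something) carefully. A robust fix is to differentiate: $h'(x) = f_1'(x) - \tfrac1a f_1'(x/a)$, and on $(x_0,\infty)$, where $f_1' > 0$ is increasing, we have $f_1'(x) \ge f_1'(x/a)$ if $x \ge x/a$ (true) and $f_1'(x) > \tfrac1a f_1'(x/a)$ once $f_1'(x/a) \ge 0$, i.e. once $x \ge x_0/a$ — but this only shows $h$ is increasing for $x$ large, not everywhere. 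I would instead settle uniqueness via the convexity-based monotonicity of $x \mapsto f_1(x) - f_1(x/a)$ on each side of $x_0$ separately (it is negative and eventually increasing on the left-ish region, positive and increasing on the right), which is exactly the kind of tedious-but-elementary case analysis the authors flag in the surrounding text.
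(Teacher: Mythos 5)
Your proof is essentially the paper's: part (i) by the chain rule, part (iv) by the identity $f_3'(x)=\tfrac1a f_1'\bigl(x_0+\tfrac{x-x_0}{a}\bigr)$ together with the monotonicity of $f_1'$ on either side of $x_0$, and parts (ii)--(iii) by a sign analysis of $h:=f_1-f_2$ on the three regions $(0,x_0]$, $(x_0,ax_0)$, $[ax_0,\infty)$. (The paper handles the two outer regions via the dilation $\sigma:(x,y)\mapsto(ax,y)$ of the graph, but the content is identical to your comparison of $f_1(x)$ with $f_1(x/a)$ on a monotone piece of $f_1$.)

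The one step you leave open --- uniqueness of the crossing on $(x_0,ax_0)$, which you flag as the main obstacle --- has a one-line resolution, and your worry there rests on a small misstatement: you write that on $(x_0,ax_0)$ ``$f_1'(x/a)$ could have either sign,'' but for $x$ in that interval $x/a\in(x_0/a,x_0)$ lies strictly to the \emph{left} of the minimizer, so $f_1'(x/a)<0$ while $f_1'(x)>0$. Hence $h'(x)=f_1'(x)-\tfrac1a f_1'(x/a)>0$ throughout $(x_0,ax_0)$; since $h(x_0)=f_1(x_0)-f_1(x_0/a)<0$ and $h(ax_0)=f_1(ax_0)-f_1(x_0)>0$, the zero $x_1$ exists and is unique in that interval, and $h<0$ to its left, $h>0$ to its right there. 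Combined with your (correct) sign determination of $h$ on $(0,x_0)$ and on $(ax_0,\infty)$, this finishes (ii) and (iii); no case analysis or convexity argument beyond the strict monotonicity of $f_1'$ is needed. This is precisely the observation the paper invokes ($f_1'>0$ and $f_2'<0$ on $(x_0,ax_0)$, reducing to a monotone-crossing argument as in its Lemma 3.3).
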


\begin{proof}
  (i) is obvious.  To prove (ii), we use the fact that
  $f_1(x_0)=f_2(ax_0)$ and $f_1 \sprime >0, f_2 \sprime <0$ for all $x
  \in (x_0, ax_0)$.  By a slightly modified version of Lemma
  \ref{lem:intersection}, we know there is a unique intersection point
  in $(x_0,ax_0)$.  Also, we see that now (iii) certainly holds in
  $[x_0,ax_0]$.  The possibility of intersection \emph{outside} of
  this interval will be ruled out when we prove the rest of (iii).

  To prove $f_2(x) >f_1(x)$ for $x<x_0$, define the map $\sigma: (x,y)
  \mapsto (ax, y)$.  Then $\sigma$ maps the graph of $f_1$ on
  $(0,x_0/a)$ onto the graph of $f_2$ on $(0,x_0)$.  If $x \in
  (0,x_0/a)$, then $\sigma$ maps $(x,f_1(x))$ to a point above the
  graph of $f_1$, because $f_1$ decreases on $(0,x_0)$.  Similarly,
  since $f_1(x)$ increases for $x>x_0$, the image under $\sigma$ of
  its graph on this interval lies below its own graph.  But this image
  is the graph of $f_2(x)$ for $x>ax_0$.  So $f_2 (x) < f_1(x)$ for
  all $x>ax_0$.  This completes the proof of (iii).

  Finally, in part (iv), $f_1 \sprime(x_0)=f_3 \sprime(x_0)=0$.  Also,
  $f_3 \sprime(x) <0$ for $x<x_0$ and $f_3\sprime(x) >0$ for $x>x_0$.
  Then, by the chain rule
  \begin{align}
    \label{eq:fpx0a}
  f_3 \sprime (x) = f_2 \sprime \left( x+(a-1)x_0 \right) =
  \frac{1}{a} f_1 \sprime \left( \frac{x+(a-1)x_0}{a} \right)
  =\frac{1}{a} f_1 \sprime \left( x_0+\frac{x-x_0}{a} \right).
  \end{align}
  Suppose $x>x_0$.  Then
  \[
  x_0<x_0+\frac{x-x_0}{a}<x.
  \]
  Since $f_1 \sprime(x)$ is positive and increasing for $x>x_0$, by
  \eqref{eq:fpx0a} we get
  \[
  0<f_3 \sprime(x) < f_1 \sprime \left( x_0+\frac{x-x_0}{a} \right)<
  f_1 \sprime(x).
  \]
  Now suppose $x<x_0$.  Then
  \[
  x<x_0+\frac{x-x_0}{a}<x_0.
  \]
  Since $f_1 \sprime(x)$ is negative and increasing for $x<x_0$, we
  get analogously
  \[
  f_1 \sprime (x)< f_1 \sprime \left( x_0+\frac{x-x_0}{a} \right)< f_3
  \sprime (x) <0.
  \]
  Thus part (iv) is proved.
\end{proof}

%%%%%%%%%%% Convexity (USE PREVIOUS RESULT)
\begin{lem}\label{lem:convex} %%% SHOULD JUST USE SECOND DERIVATIVE
  For $n=1,2,\ldots,$ and $\alpha \leq 1$, $\rho_n(R)$ has a positive
  second derivative for all $R > 0$ and attains the unique minimum at
  \[
  R_n:=\sqrt{n} \left( \frac{2\pi}{V_0(2-\alpha)}
  \right)^{\frac{1}{3-\alpha}}.
  \]
\end{lem}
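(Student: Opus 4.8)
The plan is to reduce everything to the single‑disk energy density $\rho(R)=\rho_1(R)$ and then use the scaling relation $\rho_n(R)=\rho(R/\sqrt{n})$ recorded in \eqref{eq:rho_n}. First I would write $\rho$ out explicitly: from $E_1(R)=2\pi R+V_0(\alpha)R^{4-\alpha}$ (Lemma \ref{l:BE}) we get
\[
\rho(R)=\frac{E_1(R)}{\pi R^2}=\frac{2}{R}+\frac{V_0}{\pi}\,R^{2-\alpha},
\]
which is a smooth positive function on $(0,\infty)$ since each Gamma factor in \eqref{eq:C0} is positive for $\alpha<2$, so $V_0=V_0(\alpha)>0$.

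Next, differentiate twice:
\[
\rho'(R)=-\frac{2}{R^2}+\frac{V_0(2-\alpha)}{\pi}\,R^{1-\alpha},\qquad
\rho''(R)=\frac{4}{R^3}+\frac{V_0(2-\alpha)(1-\alpha)}{\pi}\,R^{-\alpha}.
\]
For $\alpha\leq 1$ we have $2-\alpha>0$ and $1-\alpha\geq 0$, so every term of $\rho''$ is nonnegative and the first is strictly positive; hence $\rho''(R)>0$ for all $R>0$ and $\rho$ is strictly convex. Setting $\rho'(R)=0$ gives $R^{3-\alpha}=2\pi/(V_0(2-\alpha))$, i.e. the unique critical point $R=R_1:=\left(2\pi/(V_0(2-\alpha))\right)^{1/(3-\alpha)}$, which by strict convexity is the unique minimizer of $\rho$ on $(0,\infty)$.

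Finally, pass to general $n$: from $\rho_n(R)=\rho(R/\sqrt{n})$ the chain rule gives $\rho_n''(R)=n^{-1}\rho''(R/\sqrt{n})>0$ for all $R>0$, so $\rho_n$ is strictly convex, and its unique minimum is attained where $R/\sqrt{n}=R_1$, i.e. at $R=\sqrt{n}\,R_1=R_n$, as claimed. There is essentially no obstacle in this argument; the only points requiring a word of care are recording that $2-\alpha>0$ and $V_0>0$ (so that the stated formula for $R_n$ is well defined and the signs of the terms in $\rho''$ are as asserted), and noting that the $\alpha\leq 1$ hypothesis is exactly what forces the second, potentially negative, term of $\rho''$ to be nonnegative.
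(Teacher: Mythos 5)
Your proof is correct and follows essentially the same route as the paper: compute $\rho''$ explicitly for the single-disk density, observe that $\alpha\leq 1$ makes both terms nonnegative with the first strictly positive, locate the critical point, and then transfer to general $n$ via the scaling relation \eqref{eq:rho_n} (the paper invokes part (i) of Lemma \ref{lem:scale} where you apply the chain rule directly). No gaps.
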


\begin{proof}
  First, we prove the $n=1$ case by differentiating $\rho$ twice:
  \begin{align*}
    \rho(R) &= \frac{2}{R}+\frac{V_0}{\pi}R^{2-\alpha}\\
    \rho \sprime (R) &= -\frac{2}{R^2}+\frac{V_0}{\pi}(2-\alpha)R^{1-\alpha}\\
    \rho \dprime (R)&= \frac{4}{R^3}+\frac{V_0}{\pi
      R^\alpha}(2-\alpha)(1-\alpha).
  \end{align*}
  The second derivative is clearly positive for all $R>0$.  Hence
  $\rho(R)$ attains the unique minimum at $R = R_1$, where
  \begin{align*}
    R_1 = \left( \frac{2\pi}{V_0(2-\alpha)}
    \right)^{\frac{1}{3-\alpha}}.
  \end{align*}

  In the case of general $n > 1$, \eqref{eq:rho_n} and part (i) of
  Lemma \ref{lem:scale} yield the result.
\end{proof}

%%%%%%%%%% WHAT THE MINIMUM RHO LOOKS LIKE
\begin{lem} \label{lem:rho_min} For $R>0$, let
  $\rhom(R):=\displaystyle \min_{n \in \N} \rho_n(R)$.  If we
  partition $\RR^+$ into disjoint intervals
  \begin{align*}
    I_1&:=(0, R_{c1}],
    \\
    I_n&:=(R_{c(n-1)}, R_{cn}], \qquad n=2,3, \ldots,
  \end{align*}
  then
  \[
  R \in I_n \implies \rhom(R)=\rho_n(R).
  \]
\end{lem}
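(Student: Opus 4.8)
The plan is to reduce the statement to two facts about the crossover radii: that $(R_{cn})_{n\ge1}$ is strictly increasing, and that the sign of $\rho_n(R)-\rho_{n+1}(R)$ is controlled by the position of $R$ relative to $R_{cn}$. Both come from observing that $\rho_{n+1}$ is a dilation of $\rho_n$: by \eqref{eq:rho_n} we have $\rho_m(R)=\rho(R/\sqrt m)$, hence $\rho_{n+1}(R)=\rho_n(R/a_n)$ with $a_n:=\sqrt{(n+1)/n}>1$. By Lemma \ref{lem:convex}, $\rho_n\in C^2(\RR^+)$ with $\rho_n\dprime>0$, and $\rho_n$ attains its minimum at $R_n$, with $a_n R_n=R_{n+1}$. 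So Lemma \ref{lem:scale} applies with $f_1=\rho_n$, $f_2=\rho_{n+1}$, $x_0=R_n$, $a=a_n$.

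First I would invoke part (ii) of Lemma \ref{lem:scale}: $\rho_n$ and $\rho_{n+1}$ have a unique intersection in $(R_n, R_{n+1})$, and since Lemma \ref{lem:r_cn} says $\rho_n=\rho_{n+1}$ has only one positive solution, namely $R_{cn}$, we conclude $R_n<R_{cn}<R_{n+1}$. Applying this with $n+1$ in place of $n$ gives $R_{n+1}<R_{c(n+1)}$, so $R_{cn}<R_{c(n+1)}$ for every $n$; thus the intervals $I_n$ are nonempty and partition $\RR^+$. Next, part (iii) of Lemma \ref{lem:scale} gives, for each $n$,
\[
R<R_{cn}\ \Longrightarrow\ \rho_n(R)<\rho_{n+1}(R),\qquad
R>R_{cn}\ \Longrightarrow\ \rho_n(R)>\rho_{n+1}(R),
\]
with equality exactly at $R=R_{cn}$.

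The last step is a telescoping comparison. Fix $n$ and $R\in I_n$; the goal is $\rho_n(R)\le\rho_m(R)$ for all $m\in\N$, which is trivial for $m=n$. If $m<n$, then for each $k\in\{m,\dots,n-1\}$ monotonicity of $R_{c\cdot}$ and $R>R_{c(n-1)}$ give $R>R_{ck}$, so $\rho_k(R)>\rho_{k+1}(R)$; chaining from $k=m$ to $k=n-1$ yields $\rho_m(R)>\rho_n(R)$. If $m>n$, then $R\le R_{cn}\le R_{ck}$ for each $k\in\{n,\dots,m-1\}$, so $\rho_k(R)\le\rho_{k+1}(R)$ (strictly once $k>n$); chaining from $k=n$ to $k=m-1$ gives $\rho_n(R)\le\rho_m(R)$. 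For $n=1$ only the second case occurs, with the same argument. Hence $\rhom(R)=\rho_n(R)$ for $R\in I_n$.

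I expect the only genuine nuisance to be bookkeeping of strict versus non-strict inequalities at the right endpoint $R=R_{cn}$, where $\rho_n(R_{cn})=\rho_{n+1}(R_{cn})$; the telescoping above is arranged so this equality is harmless (it still gives $\rho_n\le\rho_m$). No separate convexity or unimodality argument is needed beyond what is already encapsulated in Lemmas \ref{lem:convex} and \ref{lem:scale}.
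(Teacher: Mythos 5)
Your proposal is correct and follows essentially the same route as the paper: monotonicity of $(R_{cn})$ via part (ii) of Lemma \ref{lem:scale}, the sign dichotomy for $\rho_n-\rho_{n+1}$ via part (iii), and then the two-sided telescoping comparison over $k$ between $m$ and $n$. The paper's proof is a slightly terser version of exactly this argument, including the same (harmless) handling of the non-strict inequality at the right endpoint $R=R_{cn}$.
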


\begin{proof}
  First, by part (ii) of Lemma \ref{lem:scale}, we have that $R_{cn}$
  lie between the successive minima of $\rho_n$, so they increase in
  $n$.  By part (iii) of Lemma \ref{lem:scale}, we have
  \begin{align}
    R<R_{cn} &\implies \rho_n(R) <\rho_{n+1}(R) \label{1stimpl}
    \\
    R>R_{cn} &\implies \rho_n(R) >\rho_{n+1}(R). \label{2ndimpl}
  \end{align}
  Suppose $R\leq R_{cn}$, $n \geq 1$.  Then $R\leq
  R_{cn}<R_{c(n+1)}<\cdots $, and repeatedly using \eqref{1stimpl}
  gives
  \[
  \rho_n(R)\leq \rho_{n+1}(R) \leq \cdots.
  \]
  The result of the Lemma for $R \in I_1$ then follows immediately.
  Otherwise, suppose $R>R_{c(n-1)}$, where $n>1$.  Then
  $R>R_{c(n-1)}>R_{c(n-2)}> \cdots$, so \eqref{2ndimpl} gives
  \[
  \rho_n(R) <\rho_{n-1}(R)<\cdots <\rho_1(R).
  \]
  Thus $R \in (R_{c(n-1)}, R_{cn}]$ implies that $\rho_n(R) \leq
  \rho_k(R)$ for every $k \in \mathbb N$, yielding the claim.
\end{proof}

%%%%%%%% DECREASING RHO_CN
%%%%%%%%
\begin{lem}\label{lem:rho_cn}
  For $\alpha \leq 1$, the sequence $\rho_{cn}:=\rho_n(R_{cn})$ is
  decreasing in $n$.
\end{lem}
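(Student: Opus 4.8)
The plan is to reduce the monotonicity of the sequence $\rho_{cn}$ to a one-dimensional monotonicity statement about level sets of the single convex profile $\rho$, thereby avoiding any direct manipulation of the explicit formula for $R_{cn}$ in Lemma~\ref{lem:r_cn}.

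First I would collect the structural facts about $\rho$. By Lemma~\ref{lem:convex} with $n=1$ we have $\rho\dprime>0$ on $\RR^+$; moreover, from $\rho(R)=\tfrac{2}{R}+\tfrac{V_0}{\pi}R^{2-\alpha}$ and $\alpha\le 1$, we get $\rho(R)\to\infty$ as $R\to 0^+$ and as $R\to\infty$, so $\rho$ is strictly decreasing on $(0,R_1)$, strictly increasing on $(R_1,\infty)$, and attains its only minimum at $R_1$. Combining $\rho_n(R)=\rho(R/\sqrt n)$ from \eqref{eq:rho_n} with $R_n=\sqrt n\,R_1$ (Lemma~\ref{lem:convex}), every $\rho_n$ has the \emph{same} minimum value $\rho(R_1)$, attained at $R_n$. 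Next, for a level $y>\rho(R_1)$ let $r_-(y)<R_1<r_+(y)$ denote the two solutions of $\rho(r)=y$, which exist and are unique by the above. The crux is the elementary observation that $Q(y):=r_+(y)/r_-(y)$ is \emph{strictly increasing} on $(\rho(R_1),\infty)$: if $\rho(R_1)<y_1<y_2$ then $r_-(y_2)<r_-(y_1)$ since $\rho$ is strictly decreasing on $(0,R_1)$, and $r_+(y_1)<r_+(y_2)$ since $\rho$ is strictly increasing on $(R_1,\infty)$, whence $Q(y_1)<Q(y_2)$.

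I would then translate the definition of $R_{cn}$ into this language. Since $\rho_{n+1}(R)=\rho(R/\sqrt{n+1})=\rho\big(R/(a\sqrt n)\big)=\rho_n(R/a)$ with $a:=\sqrt{(n+1)/n}>1$, part~(ii) of Lemma~\ref{lem:scale} (applied to $f_1=\rho_n$, whose minimum is at $R_n$) gives $R_{cn}\in(R_n,aR_n)=(R_n,R_{n+1})$. Hence $R_{cn}/\sqrt n>R_1$ and $R_{cn}/\sqrt{n+1}<R_1$, and since $\rho(R_{cn}/\sqrt n)=\rho(R_{cn}/\sqrt{n+1})=\rho_{cn}$ we conclude $R_{cn}/\sqrt n=r_+(\rho_{cn})$ and $R_{cn}/\sqrt{n+1}=r_-(\rho_{cn})$, so
\[
Q(\rho_{cn})=\frac{R_{cn}/\sqrt n}{R_{cn}/\sqrt{n+1}}=\sqrt{\frac{n+1}{n}}.
\]
Because $n\mapsto\sqrt{(n+1)/n}=\sqrt{1+1/n}$ is strictly decreasing, $Q(\rho_{c(n+1)})<Q(\rho_{cn})$, and the strict monotonicity of $Q$ then forces $\rho_{c(n+1)}<\rho_{cn}$.

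The routine ingredients (convexity and one-sided monotonicity of $\rho$) are already in hand, so the only real content is the reduction itself; the step most prone to error is keeping straight which branch, $r_-$ or $r_+$, each rescaled argument $R_{cn}/\sqrt n$ and $R_{cn}/\sqrt{n+1}$ lands on, which is exactly what Lemma~\ref{lem:scale}(ii) pins down. A brute-force alternative --- inserting the closed form of $R_{cn}$ into $\rho_n(R_{cn})$ and showing the consecutive differences keep a fixed sign --- is possible but considerably messier, and I would avoid it.
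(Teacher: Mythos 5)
Your proof is correct, and it takes a genuinely different route from the paper. The paper proves the claim by a graph-comparison argument: it normalizes $\rho_{n}$, $\rho_{n+1}$ and $\rho_{n+1}$, $\rho_{n+2}$ by translation so that their minima sit at prescribed points, invokes Lemma~\ref{lem:scale}(iv) to verify the derivative ordering required by Lemma~\ref{lem:intersection}, concludes that the second pair intersects lower than the first, and then uses Lemma~\ref{cor:slide} to account for the fact that the gap between successive minima shrinks. Your argument replaces all of this with a single scalar invariant: since $\rho(R_{cn}/\sqrt{n})=\rho(R_{cn}/\sqrt{n+1})=\rho_{cn}$ at two distinct points, these two points must be the two preimages $r_\pm(\rho_{cn})$ of the level $\rho_{cn}$ under the strictly unimodal profile $\rho$, so the ratio $Q(\rho_{cn})=r_+(\rho_{cn})/r_-(\rho_{cn})$ equals $\sqrt{(n+1)/n}$, which decreases in $n$; since $Q$ is strictly increasing in the level (because $r_+$ increases and $r_-$ decreases), $\rho_{cn}$ must decrease. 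Every step checks out: the unimodality and the limits $\rho(R)\to\infty$ at $0$ and $\infty$ follow from the explicit formula with $\alpha\le 1$, the branch identification is forced because two distinct points with equal $\rho$-value necessarily straddle the minimum (so your appeal to Lemma~\ref{lem:scale}(ii) is correct but not even needed), and the strict inequality $\rho_{cn}>\rho(R_1)$ guaranteeing that $r_\pm$ are defined is automatic. What your approach buys is economy: it dispenses with Lemmas~\ref{lem:intersection}, \ref{cor:slide}, and \ref{lem:scale}(iv) entirely and makes the monotonicity transparent as a statement about level sets of one convex function; the paper's approach is more general in spirit (it does not exploit that all the $\rho_n$ are dilations of a single profile with a common minimum value), but here that generality is not needed.
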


\begin{proof}
  Let $d_n:=R_{c(n+1)}-R_{cn}=R_{c1}(\sqrt{n+1}-\sqrt{n})$ be the
  distance between the successive minima described in Lemma
  \ref{lem:convex}.  Clearly $(d_n)$ is decreasing.  Now consider the
  graphs of functions $\rho_n(R)$ and $\rho_{n+1}(R)$, whose unique
  intersection point is $(R_{cn},\rho_{cn})$.  Shift these
  horizontally so that their minima are at $R=0$ and $R=d_n$, and call
  the new functions whose graphs these are as $f_2$ and $g_2$,
  respectively.  Note that the $\rho$-value of the intersection of
  these new functions is still $\rho_{cn}$.  Next, consider the graphs
  of $\rho_{n+1}(R)$ and $\rho_{n+2}(R)$, whose intersection point is
  $(R_{c(n+1)}, \rho_{c(n+1)})$.  As before, slide these so their
  respective minima are at $R=0$ and $R = d_n$, and call the new
  functions whose graphs these are as $f_1, g_1$.  By \eqref{eq:rho_n}
  and part (iv) of Lemma \ref{lem:scale}, $f_1, f_2, g_1, g_2$
  satisfy, up to translations and dilations, the hypotheses of Lemma
  \ref{lem:intersection}.  Thus, the intersection of the graphs of
  $f_1$ and $g_1$ lies below that of $f_2$ and $g_2$.

  Let $g_3$ be $g_1$ shifted to the left so that its minimum is now at
  $d_{n+1}$ rather than $d_n$.  By Lemma \ref{cor:slide}, the
  intersection point of the graphs of $f_1$ and $g_3$ is still below
  that of $f_2$ and $g_2$.  But the $\rho$-value of this intersection
  is $\rho_{c(n+1)}$.  Thus $\rho_{c(n+1)}<\rho_{cn}$ for every $n \in
  \mathbb N$.
\end{proof}

%%%%%%%%%%%% RHO BOUND IN TERMS OF R
%%%%%%%%%%%%
\begin{lem}\label{lem:rhobound}
  For $\alpha \leq 1$ and $R \geq R_{c1}$, we have $\rhom(R) \leq
  \rhoc$.
\end{lem}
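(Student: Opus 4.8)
The plan is to reduce everything to the already-established structural lemmas: the identification of $\rhom$ with a single $\rho_n$ on each interval $I_n$ (Lemma \ref{lem:rho_min}), the convexity of each $\rho_n$ (Lemma \ref{lem:convex}), and the monotone decrease of the intersection values $\rho_{cn}$ (Lemma \ref{lem:rho_cn}). First I would fix $R \geq R_{c1}$ and use Lemma \ref{lem:rho_min} to pick the unique $n \geq 1$ with $R \in I_n$ and $\rhom(R) = \rho_n(R)$. If $n = 1$ then necessarily $R = R_{c1}$, so $\rhom(R) = \rho_1(R_{c1}) = \rhoc$ and the inequality holds with equality; hence I may assume $n \geq 2$, i.e. $R \in (R_{c(n-1)}, R_{cn}]$.

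Next I would locate the minimizer $R_n$ of $\rho_n$ inside the interior of $I_n$. By part (ii) of Lemma \ref{lem:scale} --- exactly as invoked in the proof of Lemma \ref{lem:rho_min} --- the intersection abscissa $R_{ck}$ lies strictly between the minimizers $R_k$ and $R_{k+1}$ of $\rho_k$ and $\rho_{k+1}$; applying this with $k = n-1$ and $k = n$ gives $R_{c(n-1)} < R_n < R_{cn}$. Since $\rho_n$ is convex on $[0,\infty)$ by Lemma \ref{lem:convex}, a convex function on a closed interval is bounded by its endpoint values, and $\rho_n$ is continuous, I obtain
\[
  \sup_{R \in I_n} \rho_n(R) = \max\bigl(\rho_n(R_{c(n-1)}),\, \rho_n(R_{cn})\bigr).
\]

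Then I would evaluate the two endpoint values. By definition $\rho_n(R_{cn}) = \rho_{cn}$; and because $R_{c(n-1)}$ is, by Lemma \ref{lem:r_cn}, the common solution of $\rho_{n-1}(R) = \rho_n(R)$, we also have $\rho_n(R_{c(n-1)}) = \rho_{n-1}(R_{c(n-1)}) = \rho_{c(n-1)}$. Combining with the previous display, $\rhom(R) = \rho_n(R) \leq \max(\rho_{c(n-1)}, \rho_{cn})$. Finally, Lemma \ref{lem:rho_cn} says $(\rho_{cn})$ is decreasing, so $\rho_{cn} \leq \rho_{c(n-1)} \leq \rho_{c1} = \rhoc$ (using $n - 1 \geq 1$), which yields $\rhom(R) \leq \rhoc$, as claimed.

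I do not anticipate a real obstacle: all the hard analytic work (convexity, monotonicity of the $\rho_{cn}$, the partition into the $I_n$) is already done in the preceding lemmas, and what remains is bookkeeping. The one place deserving a word of care is that $I_n$ is half-open at its left endpoint, so before applying the ``maximum of a convex function is attained at an endpoint'' principle one should pass to the closed interval $[R_{c(n-1)}, R_{cn}]$, which is legitimate by continuity of $\rho_n$.
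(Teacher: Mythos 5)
Your proof is correct and follows essentially the same route as the paper: on each interval $I_n$ the function $\rhom$ coincides with the convex $\rho_n$, hence is bounded by the larger of its endpoint values $\rho_{c(n-1)}$ and $\rho_{cn}$, and the monotone decrease of $(\rho_{cn})$ from Lemma \ref{lem:rho_cn} then gives the bound $\rhoc$. Your version just spells out the endpoint evaluation and the half-open-interval detail that the paper's terser ``lies below the piecewise linear interpolant'' phrasing leaves implicit.
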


\begin{proof}
  Suppose $R \geq R_{c1}$.  Since $\rhom(R)$ is convex between
  successive $R_{cn}$, it lies below the piecewise linear function
  connecting the $(R_{cn}, \rho_{cn})$ points.  By Lemma
  \ref{lem:rho_cn}, this function decreases.
\end{proof}

We are now able to prove the main result of this section. 

\begin{prop}[Upper Bound on Minimal Energy]
\label{prop:Escaling}
If $\alpha \leq 1$ and $\Omega$ is a minimizer of $E$ with mass $m
\geq m_{c1}$, where $m_{c1}$ is defined in \eqref{mc1}, then
\[
E(\Omega) \leq m \rhoc = {m \over m_{c1}} E \left( B_{\sqrt{m_{c1} /
      \pi}}(0) \right).
\]
\end{prop}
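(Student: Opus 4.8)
The plan is to sandwich $E(\Omega)$ between the competitor energies built from $n$ widely separated equal disks and the envelope estimate already established in Lemma~\ref{lem:rhobound}. First I would record the competitor bound: for $R := \sqrt{m/\pi}$, a fixed $n \in \N$, and any $\delta > 0$, one can place $n$ pairwise disjoint disks of radius $R/\sqrt{n}$ (total area $\pi R^2 = m$) with centers separated far enough that the sum of the cross interaction integrals in \eqref{V} is below $\delta$. This produces a measurable set $F$ with $|F| = m$ and, by Lemma~\ref{l:BE} applied to each disk's self-energy, $E(F) < n\,E_1(R/\sqrt{n}) + \delta = E_n(R) + \delta$. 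Since $\Omega$ is a minimizer of $E$ with mass $m$, $E(\Omega) \le E(F)$; letting $\delta \downarrow 0$ and using $\rho_n(R) = E_n(R)/(\pi R^2)$ gives $E(\Omega) \le m\,\rho_n(R)$ for every $n$, hence $E(\Omega) \le m\,\rhom(R)$.

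Next I would show that the hypothesis $m \ge m_{c1}$ is exactly $R \ge R_{c1}$. Taking $n = 1$ in Lemma~\ref{lem:r_cn} gives $R_{c1}^{\,3-\alpha} = 2\pi(\sqrt{2}-1)/\bigl(V_0\,(1 - 2^{\alpha/2 - 1})\bigr)$; substituting \eqref{eq:C0} for $V_0$ and matching the Gamma-function factors against \eqref{mc1} yields $\pi R_{c1}^2 = m_{c1}$, i.e. $R_{c1} = \sqrt{m_{c1}/\pi}$. So $R \ge R_{c1}$, and since $\alpha \le 1$, Lemma~\ref{lem:rhobound} gives $\rhom(R) \le \rhoc$, whence $E(\Omega) \le m\,\rhoc$. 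To finish I would rewrite the right-hand side: $\rhoc = \rho_1(R_{c1}) = E_1(R_{c1})/(\pi R_{c1}^2) = E\bigl(B_{R_{c1}}(0)\bigr)/m_{c1}$ by Lemma~\ref{l:BE} and the identity just proved, so $m\,\rhoc = \tfrac{m}{m_{c1}}\,E\bigl(B_{\sqrt{m_{c1}/\pi}}(0)\bigr)$, which is the claim.

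I do not expect a serious obstacle here: the substantive work — identifying the envelope $\rhom$ and showing it lies below the \emph{decreasing} sequence $\rho_n(R_{cn})$ — has already been done in Lemmas~\ref{lem:r_cn}--\ref{lem:rhobound}. The two points that still need a bit of care are the infimum-versus-minimum issue in the competitor step ($n$ disks ``infinitely far apart'' is not a set, so one must pass through an approximating sequence and appeal to the minimality of $\Omega$), and the elementary but slightly fiddly verification that $m_{c1}$ in \eqref{mc1} coincides with $\pi R_{c1}^2$ from Lemma~\ref{lem:r_cn}.
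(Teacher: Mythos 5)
Your proposal is correct and follows essentially the same route as the paper: test $E$ against $n$ equal disks placed far apart, pass to the infimum over separations and over $n$ to get $E(\Omega)\le m\,\rho_{\text{min}}(\sqrt{m/\pi})$, and invoke Lemma~\ref{lem:rhobound} together with the identity $m_{c1}=\pi R_{c1}^2$. The paper's proof is just a one-sentence version of this; your added care about the infimum-versus-minimum point and the explicit check that $R_{c1}=\sqrt{m_{c1}/\pi}$ are details it leaves implicit.
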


\begin{proof}
  In view of Lemma \ref{lem:rhobound}, testing $E$ with a union of $n
  \geq 1$ disks of mass $m/n$ sufficiently far apart and choosing $n$
  optimally, we get configurations whose energy can be made
  arbitrarily close to $m \rhoc$.  The desired inequality then
  follows.
\end{proof}
%%%%%%%%%% NONEXISTENCE

\section{Nonexistence of minimizers}
\label{sec:nonexistence}

The upper bound for the minimum of $E$ obtained in the preceding
section allows us to find a condition guaranteeing nonexistence of
minimizers as in \cite{km:cpam12}. Here, however, we will further
refine those estimates to ensure that the threshold value of $m$ for
non-existence approaches $m_{c1}(0) \approx 2.051$ as $\alpha \to 0$,
which is sharp.

We introduce an auxiliary function
\begin{align}
  \label{rho0}
  \rho_0(R) := {2 \over R} + {2^\alpha \pi^{1-\alpha} \over
    \rho_{c1}^\alpha} R^{2 - 2 \alpha},
\end{align}
where, as in Proposition \ref{prop:Escaling}, $\rho_{c1} = E_1(R_{c1})
/ ( \pi R_{c1}^2)$ and $R_{c1} = \sqrt{m_{c1} / \pi}$. Then we have
the following result concerning the roots of the equation $\rho_0(R) =
\rho_{c1}$ that exceed $R_{c1}$.

\begin{lem}
  \label{lem:rho0unique}
  For every $\alpha \leq \frac12$ there exists a unique value of $R_0
  \geq R_{c1}$ such that $\rho_0(R_0) = \rho_{c1}$. Moreover, if
  $R>R_{c1}$ then $\rho_0(R) > \rho_{c1}$ if and only if $R > R_0$.
\end{lem}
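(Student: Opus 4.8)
The plan is to exploit the convexity of $\rho_0$ and reduce everything to comparing $\rho_0(R_{c1})$ with $\rho_{c1}$. First I would record that, for $0<\alpha\le\tfrac12$,
\[
\rho_0'(R)=-\frac{2}{R^2}+\frac{2^\alpha\pi^{1-\alpha}}{\rho_{c1}^\alpha}(2-2\alpha)R^{1-2\alpha},\qquad
\rho_0''(R)=\frac{4}{R^3}+\frac{2^\alpha\pi^{1-\alpha}}{\rho_{c1}^\alpha}(2-2\alpha)(1-2\alpha)R^{-2\alpha}>0,
\]
since $2-2\alpha>0$ and $1-2\alpha\ge0$. Thus $\rho_0$ is strictly convex on $(0,\infty)$, and as $\rho_0(R)\to+\infty$ both when $R\to0^+$ and when $R\to+\infty$, it has a unique minimizer $R_\ast$, is strictly monotone on each side of $R_\ast$, and $\rho_0(R)=\rho_{c1}$ has at most two solutions. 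If in addition $\rho_0(R_{c1})<\rho_{c1}$, then a brief case analysis on whether $R_{c1}\lessgtr R_\ast$, using strict monotonicity of $\rho_0$ on each side of $R_\ast$ together with the intermediate value theorem, produces a unique root $R_0>R_{c1}$ of $\rho_0(R)=\rho_{c1}$, with $\rho_0<\rho_{c1}$ on $(R_{c1},R_0)$ and $\rho_0>\rho_{c1}$ on $(R_0,\infty)$; this is exactly the assertion of the lemma.

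So the crux is the inequality $\rho_0(R_{c1})<\rho_{c1}$. Since $\rho_{c1}=\rho(R_{c1})=\tfrac{2}{R_{c1}}+\tfrac{V_0}{\pi}R_{c1}^{2-\alpha}$, cancelling the common term $2/R_{c1}$, dividing by $R_{c1}^{2-2\alpha}$, multiplying by $\rho_{c1}^\alpha$ and taking logarithms show that this is equivalent to $\ln G(\alpha)<\alpha\ln\bigl(\tfrac{\pi}{2}R_{c1}\rho_{c1}\bigr)$, where $G(\alpha):=\pi^2/V_0(\alpha)=\Gamma(2-\tfrac{\alpha}{2})\Gamma(3-\tfrac{\alpha}{2})/\bigl(2\Gamma(2-\alpha)\bigr)$. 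Using the $n=1$ case of Lemma \ref{lem:r_cn}, which gives $\tfrac{V_0}{\pi}R_{c1}^{3-\alpha}=\tfrac{2(\sqrt2-1)}{1-2^{\alpha/2-1}}$, one computes $\tfrac{\pi}{2}R_{c1}\rho_{c1}=\pi\bigl(\sqrt2-2^{\alpha/2-1}\bigr)/\bigl(1-2^{\alpha/2-1}\bigr)$, which exceeds $1$ for $\alpha\le\tfrac12$ (because $2^{\alpha/2-1}<1$), so the right-hand side is strictly positive on $(0,\tfrac12]$. For the left-hand side I would show $\ln G$ is concave on $[0,\tfrac12]$:
\[
(\ln G)''(\alpha)=\tfrac14\psi'\bigl(2-\tfrac{\alpha}{2}\bigr)+\tfrac14\psi'\bigl(3-\tfrac{\alpha}{2}\bigr)-\psi'(2-\alpha)\le-\tfrac12\psi'(2-\alpha)<0,
\]
because the trigamma function $\psi'$ is positive and strictly decreasing while $2-\tfrac{\alpha}{2}\ge2-\alpha$ and $3-\tfrac{\alpha}{2}>2-\alpha$. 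Since $G(0)=1$ and $(\ln G)'(0)=\tfrac12(\psi(2)-\psi(3))=-\tfrac14$, concavity yields $\ln G(\alpha)\le-\tfrac{\alpha}{4}<0$ for $\alpha\in(0,\tfrac12]$, strictly below the positive right-hand side. Hence $\rho_0(R_{c1})<\rho_{c1}$, and the lemma follows.

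The main obstacle is this middle reduction: turning $\rho_0(R_{c1})<\rho_{c1}$ into a one-variable inequality in $\alpha$ and closing it uniformly on $(0,\tfrac12]$. The concavity of $\ln G$, which comes for free from monotonicity of the trigamma function, is what reduces the whole thing to the value and slope of $\ln G$ at $\alpha=0$; a direct verification from the Gamma-function expressions is possible but less transparent.
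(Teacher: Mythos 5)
Your proof is correct, and its skeleton (strict convexity of $\rho_0$ for $\alpha\le\tfrac12$, $\rho_0\to+\infty$ at both ends, the intermediate value theorem, and the reduction of everything to the sign of $\rho_0(R_{c1})-\rho_{c1}$) is exactly the paper's. Where you genuinely diverge is in how the crucial inequality $\rho_0(R_{c1})<\rho_{c1}$ is obtained. The paper gets it with a two-line soft argument that uses no special functions: applying \eqref{diamP} to the disk $B_{R_{c1}}(0)$ gives $V(B_{R_{c1}}(0))\ge \pi^2R_{c1}^4/\bigl(\tfrac12 P(B_{R_{c1}}(0))\bigr)^\alpha$, hence $P(B_{R_{c1}}(0))<E(B_{R_{c1}}(0))=\pi R_{c1}^2\rho_{c1}$, and feeding this perimeter bound back into the same estimate yields $\pi R_{c1}^2\rho_{c1}>2\pi R_{c1}+2^\alpha\pi^{2-\alpha}\rho_{c1}^{-\alpha}R_{c1}^{4-2\alpha}=\pi R_{c1}^2\rho_0(R_{c1})$; this is the same mechanism that makes $\rho_0$ the right comparison function in Proposition \ref{thm:nonexistence}, and it is valid with no restriction beyond $\alpha\in(0,2)$. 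You instead verify the inequality head-on from the explicit formulas: after cancellation it becomes $\pi^2/V_0(\alpha)<\bigl(\tfrac{\pi}{2}R_{c1}\rho_{c1}\bigr)^\alpha$, you compute $\tfrac{\pi}{2}R_{c1}\rho_{c1}=\pi\bigl(\sqrt2-2^{\alpha/2-1}\bigr)/\bigl(1-2^{\alpha/2-1}\bigr)>1$ from the $n=1$ case of Lemma \ref{lem:r_cn} (note this silently uses that the $R_{c1}$ of Lemma \ref{lem:r_cn} and $\sqrt{m_{c1}/\pi}$ coincide, which is true and worth a sentence), and you close with $\ln\bigl(\pi^2/V_0\bigr)\le-\alpha/4$ via concavity of $\ln G$ from trigamma monotonicity --- all of which checks out, including the values $G(0)=1$ and $(\ln G)'(0)=-\tfrac14$. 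What each buys: your route is self-contained at the level of formulas and even produces a quantitative margin ($\ln G\le-\alpha/4$ against a positive right-hand side), but it is heavier and leans on Gamma/polygamma machinery; the paper's route is shorter, avoids all of that, and exposes why $\rho_0$ appears in the first place.
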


\begin{proof}
  First of all, observe that by \eqref{diamP} we have
  \begin{align*}
    \pi R_{c1}^2 \rho_{c1} = E(B_{R_{c1}}(0)) = P(B_{R_{c1}}(0)) +
    V(B_{R_{c1}}(0)) \\
    \geq P(B_{R_{c1}}(0)) +{\pi^2 R_{c1}^4 \over \left( \frac12
        P(B_{R_{c1}}(0)) \right)^\alpha}.
  \end{align*}
  Therefore, $P(B_{R_{c1}}(0)) < \pi R_{c1}^2 \rho_{c1}$ and
  \begin{align*}
    \pi R_{c1}^2 \rho_{c1} > 2 \pi R_{c1} + {2^\alpha \pi^{2-\alpha}
      \over \rho_{c1}^\alpha} R_{c1}^{4 - 2 \alpha} = \pi R_{c1}^2
    \rho_0(R_{c1}).
  \end{align*}
  Thus $\rho_0(R_{c1}) < \rho_{c1}$. On the other hand, since
  $\rho_0(R)$ is continuous and $\rho_0(R) \to +\infty$ as $R \to
  +\infty$, there exists $R_0 \geq R_{c1}$ such that $\rho_0(R_0) =
  \rho_{c1}$. Moreover, since
  \begin{align*}
    \rho_0''(R) = {4 \over R^3} + {2^{1+\alpha} \pi^{1-\alpha} \over
      \rho_{c1}^\alpha R^{2 \alpha}} (1 - \alpha) (1 - 2 \alpha) > 0
    \qquad \forall R > 0,
  \end{align*}
  the function $\rho_0(R)$ is strictly convex and, hence, the value of
  $R_0$ is unique. Finally, the last statement follows from the fact
  that $\rho_0(R) - \rho_{c1}$ changes sign from negative to positive
  as $R$ increases.
\end{proof}

We now state the nonexistence result. 

\begin{prop}[Nonexistence of Minimizers]
\label{thm:nonexistence}
Let $\alpha \leq \frac12$ and let $m_2 = \pi R_0^2$, where $R_0$ is as
in Lemma \ref{lem:rho0unique}. Then there is no minimizer of $E$ with
mass $m$ for any $m > m_2$.
\end{prop}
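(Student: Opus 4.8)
The plan is to argue by contradiction: suppose $\Omega$ is a minimizer of $E$ with $|\Omega| = m > m_2 = \pi R_0^2$, and derive a violation of the energy upper bound from Proposition \ref{prop:Escaling}. First I would record the elementary lower bound on the energy of any minimizer coming from connectedness. By Proposition \ref{prp-basic}, $\Omega$ is bounded and connected, so \eqref{diamP} gives $\text{diam}(\Omega) \le \tfrac12 P(\Omega)$. Since $\Omega$ is contained in a ball of diameter $\text{diam}(\Omega)$, for any $x, y \in \Omega$ we have $|x-y| \le \text{diam}(\Omega) \le \tfrac12 P(\Omega)$, hence
\[
  V(\Omega) = \int_\Omega \int_\Omega \frac{dx\,dy}{|x-y|^\alpha} \ge \frac{|\Omega|^2}{\left( \tfrac12 P(\Omega) \right)^\alpha} = \frac{m^2 2^\alpha}{P(\Omega)^\alpha}.
\]
Combining with the isoperimetric inequality $P(\Omega) \ge \sqrt{4\pi m} = 2\sqrt{\pi m}$ for the perimeter term, we get $E(\Omega) = P(\Omega) + V(\Omega) \ge 2\sqrt{\pi m} + m^2 2^\alpha / P(\Omega)^\alpha$. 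The issue is that $P(\Omega)$ appears with opposite monotonicity in the two lower bounds, so I cannot simply plug in the isoperimetric bound everywhere; I would instead minimize the right-hand side over the admissible range of $P(\Omega)$, or more simply use that the function $P \mapsto 2P\cdot\tfrac12 + \cdots$ — better, observe that we only need the bound at the value $P(\Omega) = \pi R^2 \rho_{c1}$ forced by the upper bound, which is exactly how $\rho_0$ is designed.

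Concretely, set $R := \sqrt{m/\pi}$, so $R > R_0 \ge R_{c1}$. Assume for contradiction the minimizer exists; by Proposition \ref{prop:Escaling} (applicable since $m \ge m_{c1}$ and $\alpha \le \tfrac12 \le 1$) we have $E(\Omega) \le m\rho_{c1} = \pi R^2 \rho_{c1}$. In particular $P(\Omega) \le E(\Omega) \le \pi R^2 \rho_{c1}$, so $\tfrac12 P(\Omega) \le \tfrac12 \pi R^2 \rho_{c1}$, and the connectedness bound upgrades to
\[
  V(\Omega) \ge \frac{m^2}{\left( \tfrac12 P(\Omega) \right)^\alpha} \ge \frac{m^2 2^\alpha}{(\pi R^2 \rho_{c1})^\alpha} = \frac{(\pi R^2)^2 2^\alpha}{(\pi R^2 \rho_{c1})^\alpha} = \frac{2^\alpha \pi^{2-\alpha}}{\rho_{c1}^\alpha} R^{4-2\alpha}.
\]
Together with $P(\Omega) \ge 2\pi R$ this gives $E(\Omega) \ge 2\pi R + 2^\alpha \pi^{2-\alpha} \rho_{c1}^{-\alpha} R^{4-2\alpha} = \pi R^2 \rho_0(R)$, where $\rho_0$ is exactly the auxiliary function \eqref{rho0}. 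So $\pi R^2 \rho_0(R) \le E(\Omega) \le \pi R^2 \rho_{c1}$, i.e. $\rho_0(R) \le \rho_{c1}$. But $R > R_0$, and Lemma \ref{lem:rho0unique} says precisely that $\rho_0(R) > \rho_{c1}$ for all $R > R_0$ — a contradiction.

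The only subtlety is making sure every inequality is used in the correct direction when $P(\Omega)$ is not known exactly: the key trick, already baked into the definition of $\rho_0$, is to use the crude upper bound $P(\Omega) \le E(\Omega) \le \pi R^2\rho_{c1}$ in the denominator of the $V$-lower-bound (where larger $P$ makes $V$ smaller, so this is the conservative direction) and the sharp isoperimetric lower bound $P(\Omega) \ge 2\pi R$ for the perimeter term itself. I expect this bookkeeping — and checking that the hypotheses of Proposition \ref{prop:Escaling} are met, which needs $m \ge m_{c1}$, true since $m > m_2 = \pi R_0^2 \ge \pi R_{c1}^2 = m_{c1}$ — to be the main thing to get right; the rest is immediate from Lemma \ref{lem:rho0unique}.
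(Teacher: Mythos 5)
Your proposal is correct and follows essentially the same route as the paper: apply Proposition \ref{prop:Escaling} to get $E(\Omega)\le m\rho_{c1}$, use \eqref{diamP} for the lower bound on $V(\Omega)$ with the crude bound $P(\Omega)\le m\rho_{c1}$ in the denominator and the isoperimetric inequality for the perimeter term, and conclude $\rho_0(\sqrt{m/\pi})\le\rho_{c1}$, contradicting Lemma \ref{lem:rho0unique}. The bookkeeping you flag (including $m>m_2\ge m_{c1}$) is exactly right.
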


\begin{proof}
  Suppose, to the contrary, that a minimizer $\Omega$ exists and $m >
  m_2$. By Proposition \ref{prop:Escaling} and \eqref{diamP} we have
  \begin{align*}
    m \rho_{c1} \geq E(\Omega) = P(\Omega) + V(\Omega) \geq P(\Omega)
    + {m^2 \over \left( \frac12 P(\Omega) \right)^\alpha}.
  \end{align*}
  Therefore, we get $P(\Omega) \leq m \rho_{c1}$ and, hence, with the
  help of the isoperimetric inequality we obtain
  \begin{align*}
    \rho_{c1} \geq \sqrt{4 \pi \over m} + {2^\alpha m^{1-\alpha} \over
      \rho_{c1}^\alpha}  = \rho_0( \sqrt{m / \pi} ),
  \end{align*}
  which contradicts Lemma \ref{lem:rho0unique}.
\end{proof}
%%%%

We note that the value of $m_2 = m_2(\alpha)$ in Proposition
\ref{thm:nonexistence} satisfies
\begin{align*}
  \lim_{\alpha \to 0} m_2(\alpha) = m_{c1}(0).
\end{align*}
This follows from the fact that the statement of Lemma
\ref{lem:rho0unique} remains valid up to $\alpha = 0$ and that
$\lim_{\alpha \to 0} \rho_0(R) = \rho(R)$ for every $R >
0$. Therefore, since $\rho(R)$ is strictly increasing when $R >
R_{c1}$, we have that $R_0 = R_{c1}$ in this case. The numerically
computed dependence of $m_2$ on $\alpha$ in Proposition
\ref{thm:nonexistence}, alongside with $m_{c1}(\alpha)$ from
\eqref{mc1}, is presented in Fig. \ref{f:m2}. This figure also shows
regions in the $(\alpha, m)$-plane in which minimizers of different
kinds are guaranteed to fail to exist.

\begin{figure}[t]
  \centering
  \includegraphics[width=4in]{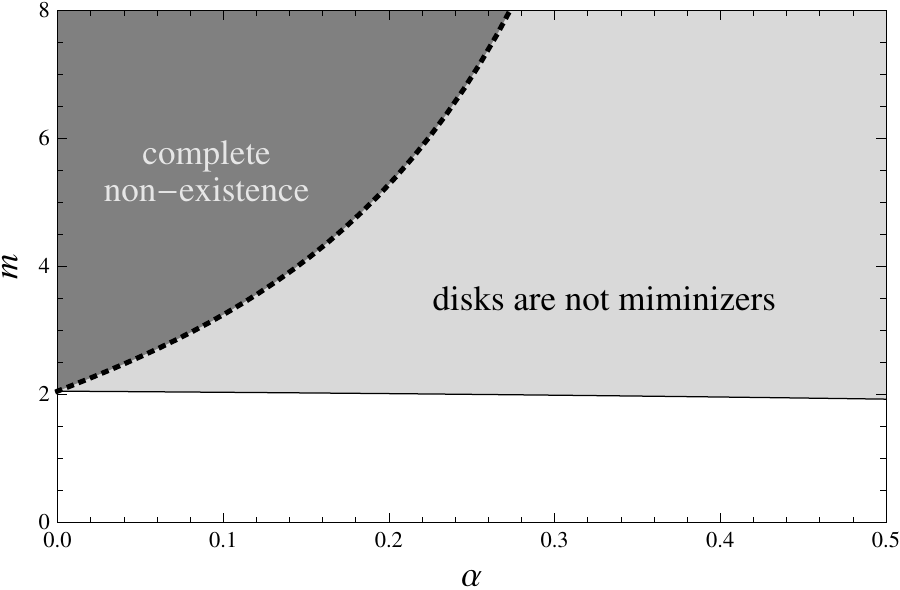}
  \caption{Regions of guaranteed non-existence of minimizers. The
    dotted line shows the plot of $m_2(\alpha)$ from Proposition
    \ref{thm:nonexistence} and the solid line shows the plot of
    $m_{c1}(\alpha)$ from \eqref{mc1}, both obtained numerically. Dark
    shaded area shows the region where minimizers of $E$ with mass $m$
    do not to exist. Light shaded area shows the region in which disks
    cannot be minimizers.}
  \label{f:m2}
\end{figure}

%%%%%%%%%%%%%%%%
\section{Shape of minimizers}
\label{sec:shape}

We now investigate under which conditions the unique, up to
translations, minimizer of $E$ with mass $m$ is a disk. This is to be
expected in the regime of sufficiently small values of $m$ depending
on $\alpha$ \cite{km:cpam12}. Here we quantify this statement by
finding a mass $m_0 = m_0(\alpha)$ below which the minimizer, if it
exists, is a disk, and such that $m_0(\alpha)$ diverges as $\alpha \to
0$.

Let $\Omega$ be a minimizer of $E$ with mass $m$. In this section it
is convenient to introduce a rescaling $\Omega_\eps := \Omega
\sqrt{\pi / m}$ which ensures that $|\Omega_\eps| = |B_1(0)| =
\pi$. Here
\begin{align}
  \label{eq:epsilon}
  \eps := \left( {m \over \pi} \right)^{3 - \alpha \over 2},
\end{align}
is the new parameter, whose smallness implies smallness of $m$. In
terms of $\Omega_\eps$, we have $\sqrt{\pi / m} \, E(\Omega) =
E_\eps(\Omega_\eps)$, where \cite{km:cpam12}
\begin{align}
  \label{eq:Eeps}
  E_\eps(\Omega_\eps) := P(\Omega_\eps) + \eps V(\Omega_\eps),
\end{align}
and $\Omega_\eps$ is a minimizer of $E_\eps$ among all open bounded
sets with boundary of class $C^1$ and area equal to $\pi$. Throughout
the rest of this section, $\Omega_\eps$ always stands for a minimizer
of $E_\eps$.

We wish to estimate the range of values of $\eps > 0$ for which
$\Omega_\eps$ must be a unit disk. We proceed via a sequence of
lemmas.

\begin{lem}[Bound on isoperimetric deficit]
\label{lem:deficit}
If $D(\Omega_\eps)$ is the isoperimetric deficit of $\Omega_\eps$
defined in \eqref{DF}, then
\[
D(\Omega_\eps) < C_0, \qquad C_0(\alpha, \eps) := \frac{\eps}{2 \pi}
\left( V_0(\alpha) - {\pi^{2-\alpha} \over \left( 1 + {\eps \over 2
        \pi} V_0(\alpha) \right)^\alpha} \right) > 0.
\]
\end{lem}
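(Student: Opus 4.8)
The plan is to compare the energy of $\Omega_\eps$ with that of the unit disk $B_1(0)$, exploit minimality, and then convert the resulting perimeter bound into a bound on the isoperimetric deficit via the isoperimetric inequality. First I would record that, by minimality of $\Omega_\eps$ among sets of area $\pi$, we have $E_\eps(\Omega_\eps) \le E_\eps(B_1(0)) = P(B_1(0)) + \eps V(B_1(0)) = 2\pi + \eps V_0(\alpha)$, using \eqref{eq:C0}. Since $V(\Omega_\eps) \ge 0$, this already gives the crude bound $P(\Omega_\eps) \le 2\pi + \eps V_0(\alpha)$. The point of the lemma, however, is to get a strictly better bound by not throwing away $V(\Omega_\eps)$ entirely; instead I would bound $V(\Omega_\eps)$ from below in terms of $P(\Omega_\eps)$.

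The key step is the lower bound on the nonlocal term. Using the connectedness of $\Omega_\eps$ and the diameter bound \eqref{diamP}, for any $x, y \in \Omega_\eps$ we have $|x-y| \le \mathrm{diam}(\Omega_\eps) \le \tfrac12 P(\Omega_\eps)$, so $|x-y|^{-\alpha} \ge \left(\tfrac12 P(\Omega_\eps)\right)^{-\alpha}$, whence
\[
V(\Omega_\eps) = \int_{\Omega_\eps}\int_{\Omega_\eps} \frac{dx\,dy}{|x-y|^\alpha} \ge \frac{|\Omega_\eps|^2}{\left(\tfrac12 P(\Omega_\eps)\right)^\alpha} = \frac{\pi^2}{\left(\tfrac12 P(\Omega_\eps)\right)^\alpha}.
\]
Combining with the minimality inequality gives $P(\Omega_\eps) + \eps \pi^2 \left(\tfrac12 P(\Omega_\eps)\right)^{-\alpha} \le 2\pi + \eps V_0(\alpha)$. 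To make this explicit I would use the crude bound $P(\Omega_\eps) \le 2\pi + \eps V_0(\alpha) = 2\pi\left(1 + \tfrac{\eps}{2\pi} V_0(\alpha)\right)$ inside the subtracted term (this only weakens it in the right direction, since the term is decreasing in $P$): $\tfrac12 P(\Omega_\eps) \le \pi\left(1 + \tfrac{\eps}{2\pi} V_0(\alpha)\right)$, so
\[
P(\Omega_\eps) \le 2\pi + \eps V_0(\alpha) - \frac{\eps \pi^2}{\pi^\alpha \left(1 + \tfrac{\eps}{2\pi} V_0(\alpha)\right)^\alpha} = 2\pi + \eps\left(V_0(\alpha) - \frac{\pi^{2-\alpha}}{\left(1 + \tfrac{\eps}{2\pi} V_0(\alpha)\right)^\alpha}\right).
\]
Dividing by $\sqrt{4\pi|\Omega_\eps|} = 2\pi$ and subtracting $1$ yields $D(\Omega_\eps) \le \tfrac{\eps}{2\pi}\left(V_0(\alpha) - \pi^{2-\alpha}\left(1 + \tfrac{\eps}{2\pi}V_0(\alpha)\right)^{-\alpha}\right) = C_0(\alpha,\eps)$, and strictness follows since the diameter bound $\mathrm{diam}(\Omega_\eps) \le \tfrac12 P(\Omega_\eps)$ is strict for a set of positive area (equality would force degeneracy), or alternatively since the isoperimetric inequality is strict unless $\Omega_\eps$ is a disk, in which case one checks $D = 0 < C_0$ directly.

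The main obstacle is really just verifying that $C_0(\alpha,\eps) > 0$, i.e. that the bound is nontrivial. This amounts to $V_0(\alpha) > \pi^{2-\alpha}\left(1 + \tfrac{\eps}{2\pi}V_0(\alpha)\right)^{-\alpha}$, equivalently $V_0(\alpha)\left(1 + \tfrac{\eps}{2\pi}V_0(\alpha)\right)^{\alpha} > \pi^{2-\alpha}$. Since the factor $\left(1 + \tfrac{\eps}{2\pi}V_0(\alpha)\right)^\alpha \ge 1$, it suffices to check $V_0(\alpha) \ge \pi^{2-\alpha}$, which by \eqref{eq:C0} is the inequality $2\pi^\alpha\,\Gamma(2-\alpha) \ge \Gamma(2-\tfrac{\alpha}{2})\Gamma(3-\tfrac{\alpha}{2})$; at $\alpha = 0$ both sides equal $2$, and one verifies the inequality persists for $\alpha \le 1$ (indeed $\alpha \le 0.034$) by monotonicity of the Gamma factors — this is the only genuinely computational point, and it is elementary.
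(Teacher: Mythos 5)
Your proof is correct and follows essentially the same route as the paper's: test $E_\eps$ against the unit disk, lower-bound $V(\Omega_\eps)$ via the diameter bound \eqref{diamP} combined with the crude perimeter bound $P(\Omega_\eps)\le 2\pi+\eps V_0$, and divide by $2\pi$. Your only addition is the positivity check, which the paper leaves implicit; note that your reduction $V_0(\alpha)\ge\pi^{2-\alpha}$ also follows in one line from $V_0=V(B_1(0))\ge |B_1(0)|^2/\operatorname{diam}(B_1(0))^\alpha=\pi^2 2^{-\alpha}>\pi^{2-\alpha}$, avoiding the Gamma-function estimate (where, since equality holds at $\alpha=0$, a bare appeal to monotonicity is a bit thin).
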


\begin{proof}
  Testing the energy with a unit ball and using Lemma \ref{l:BE}, we
  obtain
  \begin{align*}
    P(\Omega_\eps) + \eps V(\Omega_\eps) = E(\Omega_\eps) \leq
    E(B_1(0)) = 2 \pi + \eps V_0.
  \end{align*}
  In particular, $P(\Omega_\eps) \leq 2 \pi + \eps V_0$ and,
  therefore, by \eqref{diamP} we have
  \begin{align*}
    V(\Omega_\eps) \geq {2^\alpha \pi^2 \over P^\alpha(\Omega_\eps)}
    \geq {\pi^{2-\alpha} \over \left( 1 + {\eps \over 2 \pi} V_0
      \right)^\alpha}.
  \end{align*}
  Combining the two inequalities above gives the result.
\end{proof}

\begin{rem}
  \label{rem:C0}
  Observe that $C_0(\alpha, \eps)$ is a monotonically increasing
  function of $\eps$ for $\alpha$ fixed, and that $C_0(\alpha, \eps)
  \to 0$ as $\alpha \to 0$ with $\eps$ fixed. In particular, in view
  of Fig. \ref{f:m2} minimizers of $E_\eps$, if they exist, are small
  perturbations of unit disks for $\alpha \ll 1$.
\end{rem}

\begin{lem}[Bounds on potential]
\label{lem:vbound}
Let 
\begin{align}
  \label{v}
  v(x) := \int_{\Omega_\eps} {1 \over |x - y|^\alpha} \, dy
\end{align}
be the potential associated with $\Omega_\eps$. Then we have 
\[
C_1 < v(x) < C_2 \qquad \forall x \in \overline\Omega_\eps,
\]
where
\[
C_1(\alpha, \eps) := \frac{\pi^{1-\alpha}}{(1 + C_0(\alpha, \eps)
  )^\alpha} \qquad \text{and} \qquad C_2(\alpha) := \frac{2 \pi}{2 -
  \alpha} .
\]
\end{lem}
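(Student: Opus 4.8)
I would prove the two inequalities separately, the upper bound by rearrangement against a unit disk and the lower bound by the diameter estimate already available in the section. For the upper bound, fix $x \in \overline{\Omega_\eps}$ and note that $\Omega_\eps$ and the unit disk $B_1(x)$ have the same area $\pi$. Since $y \mapsto |x-y|^{-\alpha}$ is radially nonincreasing about $x$, the layer-cake representation gives
\[
v(x) = \int_0^\infty \bigl| \Omega_\eps \cap \{\, y : |x-y|^{-\alpha} > t \,\} \bigr| \, dt \le \int_0^\infty \bigl| B_1(x) \cap \{\, y : |x-y|^{-\alpha} > t \,\} \bigr| \, dt = \int_{B_1(x)} \frac{dy}{|x-y|^\alpha} = v^B(0),
\]
where the middle inequality uses $|\Omega_\eps \cap S| \le \min\{|\Omega_\eps|,|S|\} = \min\{|B_1(x)|,|S|\} = |B_1(x)\cap S|$ for every superlevel set $S$ (a disk centered at $x$), and the last equality is \eqref{v0}. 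Hence $v(x) \le C_2$. Equality in the rearrangement step forces $\Omega_\eps = B_1(x)$ up to a null set, hence exactly, by the $C^1$ regularity in Proposition \ref{prp-basic}; in that degenerate case $\Omega_\eps$ is already a disk and there is nothing further to prove, so otherwise the inequality is strict.

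For the lower bound, let again $x \in \overline{\Omega_\eps}$. Every $y \in \Omega_\eps$ satisfies $|x-y| \le \text{diam}(\Omega_\eps)$, so
\[
v(x) = \int_{\Omega_\eps} \frac{dy}{|x-y|^\alpha} \ge \frac{|\Omega_\eps|}{\text{diam}(\Omega_\eps)^\alpha} = \frac{\pi}{\text{diam}(\Omega_\eps)^\alpha}.
\]
It then remains to bound the diameter from above. By \eqref{diamP} we have $\text{diam}(\Omega_\eps) \le \tfrac12 P(\Omega_\eps)$; by the definition \eqref{DF} of the isoperimetric deficit together with $|\Omega_\eps| = \pi$ we have $P(\Omega_\eps) = 2\pi\,(1 + D(\Omega_\eps))$; and by Lemma \ref{lem:deficit} we have $D(\Omega_\eps) < C_0$. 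Chaining these, $\text{diam}(\Omega_\eps) < \pi(1+C_0)$, and therefore $v(x) > \pi \bigl( \pi (1+C_0) \bigr)^{-\alpha} = \pi^{1-\alpha}(1+C_0)^{-\alpha} = C_1$. Positivity of $C_1$ and $C_2$ is immediate.

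This lemma is essentially a bookkeeping step: it converts the area constraint (for the upper bound) and the deficit estimate of Lemma \ref{lem:deficit} (for the lower bound) into pointwise two-sided control of the potential $v$, which will be fed downstream into the Euler--Lagrange equation \eqref{EL} and the ensuing curvature estimates. The only point that needs a little care is the equality case of the rearrangement inequality in the upper bound, handled above by appeal to the regularity of $\Omega_\eps$ and the triviality of the disk case; the lower bound is entirely elementary, so I do not anticipate any serious obstacle.
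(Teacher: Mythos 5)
Your proof is correct and follows essentially the same route as the paper: the lower bound (kernel $\geq \mathrm{diam}^{-\alpha}$, then $\mathrm{diam}(\Omega_\eps) \leq \tfrac12 P(\Omega_\eps) = \pi(1+D(\Omega_\eps)) < \pi(1+C_0)$ via Lemma \ref{lem:deficit}) is identical, and your upper bound rests on the same comparison of $\Omega_\eps$ with the equal-area disk $B_1(x)$ centered at $x$, merely executed by layer-cake/bathtub slicing where the paper splits the integral over $B_1(x)\setminus\Omega_\eps$ and $\Omega_\eps\setminus B_1(x)$ and uses that the kernel is $\geq 1$ on the former and $\leq 1$ on the latter. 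Your remark on the equality case (where $\Omega_\eps$ is the unit disk centered at $x$ and $v(x)=C_2$ exactly) is a fair point that the paper's own proof also glosses over; only the non-strict bound is used downstream, so nothing is lost.
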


\begin{proof}
  For any $x \in \overline \Omega_\eps$, let $v^B$ be as in
  \eqref{vBR}.  Then
\begin{align*}
  v^B(0) - v(x) & = \int_{B_1(x)} {1 \over \norm} \, dy -
  \int_{\Omega_\eps} {1 \over |x - y|^\alpha} \, dy \\
  &= \sint{B_1(x) \setminus \Omega_\eps}{\frac{1}{\norm}}{y} -
  \sint{\Omega_\eps \setminus B_1(x)}{\frac{1}{\norm}}{y}
  \\
  & > |B_1(x) \setminus \Omega_\eps| - |\Omega_\eps \setminus B_1(x)|
  = 0,
\end{align*}
since $|\Omega| = |B_1(x)|$.  Therefore, $v(x)$ is bounded from above
by $v^B(0)$, whose value is given by \eqref{v0}.

On the other hand, with the help of Lemma \ref{lem:deficit} and
\eqref{diamP} we obtain
\[
v(x) \geq {2^\alpha \pi \over P^\alpha(\Omega_\eps)} = {\pi^{1-\alpha}
  \over (1 + D(\Omega_\eps))^\alpha} > C_1,
\]
which yields the lower bound.
\end{proof}

\begin{lem}[Convexity]
\label{lem:convexity}
There exists a unique $\eps = \eps_0(\alpha) > 0$ which solves
\begin{align}
  \label{C9}
  \frac{1}{1 + C_0(\alpha, \eps)} + 2 \eps (C_1(\alpha, \eps) -
  C_2(\alpha)) = 0.
\end{align}
Furthermore, if $\eps < \eps_0$ then $\Omega_\eps$ is strictly convex.
\end{lem}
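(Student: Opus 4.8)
The plan is to exploit the Euler--Lagrange equation for a minimizer $\Omega_\eps$ of $E_\eps$. By the Euler--Lagrange equation for $E_\eps$ (the analogue of \eqref{EL} for the functional \eqref{eq:Eeps}, with the nonlocal term carrying the factor $\eps$; see Proposition \ref{prp-basic}), there is $\mu \in \RR$ with $\kappa(x) + 2\eps v(x) = \mu$ for all $x \in \partial\Omega_\eps$, where $\kappa$ is the curvature (positive where $\Omega_\eps$ is locally convex) and $v$ is as in \eqref{v}. Hence $\kappa(x) = \mu - 2\eps v(x)$, and proving strict convexity reduces to showing this quantity is positive on $\partial\Omega_\eps$. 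Since Lemma \ref{lem:vbound} gives $v(x) < C_2$, it suffices to show $\mu - 2\eps C_2 > 0$; I shall in fact obtain $\mu > \tfrac{1}{1+C_0} + 2\eps C_1$, so that $\mu - 2\eps C_2 > \tfrac{1}{1+C_0} + 2\eps(C_1 - C_2)$, whose right-hand side is precisely the left-hand side of \eqref{C9}.

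To bound $\mu$ from below I integrate the Euler--Lagrange equation along the outer boundary component $\Gamma_0$ of $\partial\Omega_\eps$, a simple closed curve of class $C^2$ by Proposition \ref{prp-basic}. Its total curvature is $\int_{\Gamma_0}\kappa\,ds = 2\pi$, so $\mu\,|\Gamma_0| = 2\pi + 2\eps\int_{\Gamma_0} v\,ds > 2\pi + 2\eps C_1\,|\Gamma_0|$ by Lemma \ref{lem:vbound}. Since $|\Gamma_0| \le P(\Omega_\eps) = 2\pi\bigl(1 + D(\Omega_\eps)\bigr) < 2\pi(1 + C_0)$ by \eqref{DF} and Lemma \ref{lem:deficit}, dividing gives $\mu > \tfrac{1}{1+C_0} + 2\eps C_1$. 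Combined with $v < C_2$, this yields $\kappa(x) > \tfrac{1}{1+C_0} + 2\eps(C_1 - C_2)$ for every $x \in \partial\Omega_\eps$.

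Next I analyze $g(\eps) := \tfrac{1}{1+C_0(\alpha,\eps)} + 2\eps\bigl(C_1(\alpha,\eps) - C_2(\alpha)\bigr)$. It is continuous on $(0,\infty)$ with $g(0^+) = 1 > 0$ (since $C_0(\alpha,0) = 0$) and $g(\eps) \to -\infty$ as $\eps \to \infty$ (since $C_0 \to \infty$, so the first term vanishes while $C_1 \to 0$ and $C_2$ is fixed, and $C_1 < C_2$). Moreover $C_0(\alpha,\cdot)$ is increasing by Remark \ref{rem:C0}, hence both $\tfrac{1}{1+C_0}$ and $C_1 = \pi^{1-\alpha}(1+C_0)^{-\alpha}$ are decreasing; together with $C_1 - C_2 < 0$ (one checks $C_1 < \pi^{1-\alpha} < \pi < \tfrac{2\pi}{2-\alpha} = C_2$ for $\alpha < 1$) this makes $\eps \mapsto 2\eps(C_1 - C_2)$ decreasing as well, so $g$ is strictly decreasing. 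Thus \eqref{C9} has a unique root $\eps_0 > 0$ and $g > 0$ on $(0,\eps_0)$. To exclude holes: if $\partial\Omega_\eps$ had a component $\Gamma_j$ bounding a hole, then $\int_{\Gamma_j}\kappa\,ds = -2\pi$, and the same computation with $v < C_2$ gives $\mu < 2\eps C_2$; combined with $\mu > \tfrac{1}{1+C_0} + 2\eps C_1$ this forces $g(\eps) < 0$, contradicting $\eps < \eps_0$. Hence, for $\eps < \eps_0$, $\partial\Omega_\eps = \Gamma_0$ and $\kappa(x) > g(\eps) > 0$ for all $x$, so $\Omega_\eps$ is strictly convex.

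I expect the main obstacle to be the third paragraph: verifying rigorously that $g$ is monotone (its derivative, written out with the explicit forms of $V_0$, $C_0$, and $C_1$, is elementary but cumbersome — the kind of identity to be checked with {\sc Mathematica}), and applying the hole-exclusion consistently with the sign convention for curvature in the Gauss--Bonnet/rotation-index argument. Everything else is soft, relying only on the Euler--Lagrange equation together with Lemmas \ref{lem:deficit} and \ref{lem:vbound}.
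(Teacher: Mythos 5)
Your proof is correct and follows essentially the same route as the paper: integrate the Euler--Lagrange equation over the outer boundary component to get $\mu > \tfrac{1}{1+C_0} + 2\eps C_1$ via $P(\Omega_\eps) < 2\pi(1+C_0)$ and $v > C_1$, then use $v < C_2$ pointwise to conclude $\kappa > \tfrac{1}{1+C_0} + 2\eps(C_1 - C_2)$, with existence and uniqueness of $\eps_0$ from the monotonicity of the left-hand side of \eqref{C9}. Your explicit Gauss--Bonnet exclusion of holes is a detail the paper leaves implicit, since positivity of $\kappa$ on all of $\partial\Omega_\eps$ already rules out hole boundaries, whose total curvature would be $-2\pi$.
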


\begin{proof}
  First, in view of Remark \ref{rem:C0} observe that since $C_2 > C_1$
  and since $C_1(\alpha, \eps)$ is decreasing as a function of $\eps$,
  the left-hand side of \eqref{C9} is a monotonically decreasing
  continuous function of $\eps$. Therefore, existence of a unique
  solution of \eqref{C9} is guaranteed by the fact that its left-hand
  side approches unity as $\eps \to 0$, while tending to $-\infty$
  when $\eps \to +\infty$.

  By Proposition \ref{prp-basic} (after rescaling), the Euler-Lagrange
  equation for $\partial \Omega_\eps$ at $x \in \partial \Omega_\eps$
  is
  \begin{equation}
    \label{eq:EL}
    \kappa(x) + 2 \eps v(x) - \mu = 0,
  \end{equation}
  where $\kappa$ is curvature (positive if $\Omega_\eps$ is convex)
  and $\mu \in \mathbb R$ is the Lagrange multiplier.  Integrating
  \eqref{eq:EL} over the outer portion $\partial \Omega_\eps^0$ of the
  boundary $\partial \Omega_\eps$ with respect to arclength and using
  Lemmas \ref{lem:deficit} and \ref{lem:vbound} yields
  \[
  \mu = \frac{2 \pi}{|\partial \Omega_\eps^0|} + 2 \eps \bar{v} \geq
  \frac{2 \pi}{P(\Omega_\eps)} + 2 \eps \bar{v} > \frac{1}{1 + C_0} +
  2 \eps C_1,
  \]
  where $\bar{v}$ is the average of $v$ over $\partial \Omega_\eps^0$.
  Then by \eqref{eq:EL} and Lemma \ref{lem:vbound}, we have
  \[
  \kappa(x) = \mu - 2 \eps v(x) > \frac{1}{1 + C_0} + 2 \eps (C_1 -
  C_2),
  \]
  which is positive under the assumption of the Lemma.
\end{proof}

\begin{lem}[Confinement to an annulus]
\label{lem:annulus}
If $\Omega_\eps$ is convex, there exist $x_0 \in \mathbb R^2$ and
$\delta \geq 0$ such that
\begin{align*}
  B_{1-\delta}(x_0) \subseteq \Omega_\eps \subseteq B_{1+\delta}(x_0)
\end{align*}
and
\begin{align}
  \label{eq:delta}
  \delta \leq \sqrt{ \pi D(\Omega_\eps)(D(\Omega_\eps) + 2)},
\end{align}
with the convention that $B_{1-\delta}(x_0) = \varnothing$ if $\delta
\geq 1$.
\end{lem}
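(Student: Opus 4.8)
The plan is to invoke the Bonnesen inequality (Lemma \ref{l:bonnesen}) directly, since $\Omega_\eps$ is assumed convex, bounded, and open. First I would apply Lemma \ref{l:bonnesen} to $F = \Omega_\eps$, obtaining a center $x_0 \in \mathbb R^2$ and radii $0 < r_1 \leq r_2$ with $B_{r_1}(x_0) \subseteq \Omega_\eps \subseteq B_{r_2}(x_0)$ and $(r_2 - r_1)^2 / |\Omega_\eps| \leq (2 + D(\Omega_\eps)) D(\Omega_\eps)$. Since $|\Omega_\eps| = \pi$, this reads $(r_2 - r_1)^2 \leq \pi D(\Omega_\eps)(D(\Omega_\eps) + 2)$, so if I set $\delta := \tfrac12(r_2 - r_1)$ I already get the bound $\delta \leq \tfrac12\sqrt{\pi D(\Omega_\eps)(D(\Omega_\eps)+2)}$, which is in fact slightly stronger than \eqref{eq:delta}. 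The remaining task is to arrange the two inclusions so that the inner and outer radii are $1 - \delta$ and $1 + \delta$ symmetrically about $1$, rather than the a priori unrelated $r_1$ and $r_2$.

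The key observation for that step is that the \emph{area} constraint forces $r_1 \leq 1 \leq r_2$: since $B_{r_1}(x_0) \subseteq \Omega_\eps$ we have $\pi r_1^2 \leq |\Omega_\eps| = \pi$, hence $r_1 \leq 1$; and since $\Omega_\eps \subseteq B_{r_2}(x_0)$ we have $\pi \leq \pi r_2^2$, hence $r_2 \geq 1$. Therefore $1 - r_1 \geq 0$ and $r_2 - 1 \geq 0$, and if I take $\delta := \max\{1 - r_1,\, r_2 - 1\}$ then $B_{1-\delta}(x_0) \subseteq B_{r_1}(x_0) \subseteq \Omega_\eps \subseteq B_{r_2}(x_0) \subseteq B_{1+\delta}(x_0)$, with the convention that $B_{1-\delta}(x_0) = \varnothing$ when $\delta \geq 1$ (which is consistent since $r_1 \geq 0$). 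It remains to bound this $\delta$. We have $\delta = \max\{1 - r_1, r_2 - 1\} \leq r_2 - r_1$ because $1-r_1$ and $r_2-1$ are both nonnegative and their sum is $r_2 - r_1$, so each is at most $r_2 - r_1$. Combining with the Bonnesen estimate gives $\delta \leq r_2 - r_1 \leq \sqrt{\pi D(\Omega_\eps)(D(\Omega_\eps)+2)}$, which is exactly \eqref{eq:delta}.

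I do not anticipate a genuine obstacle here; the only point requiring a little care is the bookkeeping that turns the asymmetric Bonnesen annulus into the symmetric one centered at radius $1$, and verifying that the degenerate case $\delta \geq 1$ is handled by the stated convention (when $r_1 = 0$ is forced, or more precisely when $1 - r_1 \geq 1$, the inner ball is empty and the inclusion $\varnothing \subseteq \Omega_\eps$ is vacuous). One could instead obtain the sharper constant $\tfrac12$ in front of the square root by choosing $\delta = \tfrac12(r_2 - r_1)$ and recentering, but since \eqref{eq:delta} as stated suffices for the later arguments, I would present the simpler version above.
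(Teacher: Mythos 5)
Your proof is correct and follows essentially the same route as the paper: apply Bonnesen's inequality (Lemma \ref{l:bonnesen}) to the convex set $\Omega_\eps$, note that the area normalization $|\Omega_\eps|=\pi$ forces $r_1 \leq 1 \leq r_2$, and bound both $1-r_1$ and $r_2-1$ by $r_2-r_1 \leq \sqrt{\pi D(\Omega_\eps)(D(\Omega_\eps)+2)}$. Your explicit justification of $r_1 \leq 1 \leq r_2$ is a small point the paper leaves implicit, but the argument is otherwise identical.
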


\begin{proof}
  By Lemma \ref{l:bonnesen}, there exist $x_0 \in \mathbb R^2$ and $0
  < r_1 \leq 1 \leq r_2$ such that $B_{r_1}(x_0) \subseteq \Omega_\eps
  \subseteq B_{r_2}(x_0)$ and $r_2 - r_1 \leq \sqrt{\pi D(\Omega_\eps)
    (D(\Omega_\eps) + 2)}$. Hence $1 - r_1 \leq \sqrt{\pi
    D(\Omega_\eps) (D(\Omega_\eps) + 2)}$ and $r_2 - 1 \leq \sqrt{\pi
    D(\Omega_\eps) (D(\Omega_\eps) + 2)}$, and the result follows.
\end{proof}

We are now ready to prove a quantitative criterion which guarantees
that the minimizer of $E_\eps$, if it exists and is convex, is a unit
disk for a given value of $\eps$. The proof follows the ideas in the
proof of \cite[Proposition 7.5]{km:cpam12} in a quantitative way.

%%%%%%%%%%% DISK SHAPED
\begin{prop}[Minimizers are disks]
\label{thm:disk}
There exists a unique $\eps = \eps_1(\alpha) > 0$ solving
\begin{align}
  \label{e1}
  \eps C_3 (\alpha, \eps) \Big[ \eps C_3 (\alpha, \eps) C_0 ( \alpha,
  \eps) \Big( C_0 (\alpha, \eps) + 2 \Big) + 2 \Big] = 1,
\end{align}
where
\begin{align}
  \label{C3}
  C_3 (\alpha, \eps) := {\pi^2 \alpha (2 - \alpha) \Gamma(1 - \alpha)
    \over 2 \Gamma^2(2 - {\alpha \over 2})} \left( 1 + \frac23
    \sqrt{\pi C_0(\alpha, \eps) (C_0(\alpha, \eps) + 2)} \right).
\end{align}
Furthermore, if $\eps < \eps_1$ and $\Omega_\eps$ is convex, then
$\Omega_\eps$ is a unit disk.
\end{prop}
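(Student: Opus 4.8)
The plan is to show that under the stated smallness condition on $\eps$, a convex minimizer $\Omega_\eps$ must satisfy the Euler--Lagrange equation \eqref{eq:EL} with $v$ \emph{identical} to the potential of a unit disk, which forces $\partial\Omega_\eps$ to be a circle of radius one. The heart of the matter is that the curvature $\kappa(x)$ determined by \eqref{eq:EL} is an affine function of $v(x)$, so $\Omega_\eps$ being a unit disk is equivalent to $v$ being constant on $\partial\Omega_\eps$, which in turn we aim to deduce from a rigidity/fixed-point argument: the oscillation of $v$ over $\partial\Omega_\eps$ controls the deviation of $\Omega_\eps$ from a disk (via the isoperimetric deficit and Bonnesen's Lemma \ref{l:bonnesen}), and conversely that deviation controls the oscillation of $v$ through the Lipschitz bound \eqref{v1} on $dv^B/dr$; closing the loop gives $\mathrm{osc}\, v = 0$ when the product of the two Lipschitz-type constants is $<1$.

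First I would fix a convex minimizer $\Omega_\eps$ with $\eps<\eps_1$ (note $\eps_1\le\eps_0$, so convexity is automatic, but it is assumed here in any case). By Lemma \ref{lem:deficit} we have $D(\Omega_\eps)<C_0$, and by Lemma \ref{lem:annulus} there are $x_0$ and $\delta\le\sqrt{\pi C_0(C_0+2)}$ with $B_{1-\delta}(x_0)\subseteq\Omega_\eps\subseteq B_{1+\delta}(x_0)$. The key estimate is then a bound on $|v(x)-v(x')|$ for $x,x'\in\partial\Omega_\eps$. Writing $v(x)=\int_{\Omega_\eps}|x-y|^{-\alpha}\,dy$ and comparing with the potential $v^B$ of the unit disk $B_1(x_0)$ re-centered appropriately, one splits $v(x)-v(x')$ into (a) the difference $v^B(|x-x_0|)-v^B(|x'-x_0|)$ of the unit-disk potential evaluated at two radii both in $[1-\delta,1+\delta]$, controlled by $\max|dv^B/dr|$ from \eqref{v1} times $|\,|x-x_0|-|x'-x_0|\,|\le 2\delta$; and (b) the error from replacing $\Omega_\eps$ by $B_1(x_0)$, an integral over the symmetric difference $\Omega_\eps\triangle B_1(x_0)\subseteq B_{1+\delta}(x_0)\setminus B_{1-\delta}(x_0)$, whose measure is $O(\delta)$ and which is weighted by a kernel that is integrable near the boundary. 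Assembling these, and using $\delta\le\sqrt{\pi C_0(C_0+2)}$ together with the explicit constant in \eqref{v1}, one arrives at a bound of the form $\mathrm{osc}_{\partial\Omega_\eps} v \le C_3(\alpha,\eps)\cdot(\text{deviation of }\partial\Omega_\eps\text{ from the unit circle})$, with $C_3$ exactly as in \eqref{C3} — the factor $1+\tfrac23\sqrt{\pi C_0(C_0+2)}$ absorbing both the annulus width and the correction term (b).

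Next I would convert the oscillation of $v$ into a bound on the oscillation of the curvature: from \eqref{eq:EL}, $\kappa(x)-\kappa(x')=-2\eps(v(x)-v(x'))$, so $\mathrm{osc}\,\kappa\le 2\eps\,\mathrm{osc}\,v$. A support-function or direct geometric argument then shows that if the curvature of a convex $C^2$ curve enclosing area $\pi$ oscillates by at most $\eta$, the curve lies within distance $O(\eta)$ of a unit circle — concretely, one re-runs the deficit/annulus chain to express the "deviation" appearing in the previous paragraph as itself bounded by something like $\eps C_3 C_0(C_0+2)+\dots$, i.e.\ the bracketed quantity in \eqref{e1}. Substituting back produces a self-improving inequality $\mathrm{osc}_{\partial\Omega_\eps} v \le \eps C_3\big[\eps C_3 C_0(C_0+2)+2\big]\cdot \mathrm{osc}_{\partial\Omega_\eps} v$ (or the analogous inequality for $\delta$ or for the deficit), and when $\eps<\eps_1$ the coefficient is $<1$ by \eqref{e1}, forcing $\mathrm{osc}\,v=0$. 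Hence $v$ is constant on $\partial\Omega_\eps$, so $\kappa$ is constant, so $\partial\Omega_\eps$ is a circle enclosing area $\pi$, i.e.\ a unit disk. Existence and uniqueness of $\eps_1$ follows because the left-hand side of \eqref{e1} is continuous, vanishes at $\eps=0$, and is strictly increasing in $\eps$ (each factor $\eps C_3$ and $C_0$ is increasing in $\eps$ by Remark \ref{rem:C0}).

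\textbf{Main obstacle.} The delicate point is the error term (b): bounding $\int_{\Omega_\eps\triangle B_1(x_0)}|x-y|^{-\alpha}\,dy$ uniformly for $x\in\partial\Omega_\eps$ with a constant that (i) is explicit, (ii) degrades only like the annulus width $\delta\sim\sqrt{\pi C_0(C_0+2)}$, and (iii) combines cleanly with the main term so that the \emph{total} constant is exactly $C_3$ in \eqref{C3} rather than something larger. This requires choosing the comparison disk's center carefully (so the symmetric difference really does sit inside the thin annulus) and estimating the worst-case contribution of the singular kernel near $x\in\partial\Omega_\eps$ — the integrand is singular precisely where the symmetric-difference region can touch $x$, so one must exploit that the set lies in an annulus of controlled width and use the $L^1_{loc}$ nature of $|x-y|^{-\alpha}$ (for $\alpha<2$) to keep the bound finite and sharp. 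Getting the clean factor $1+\tfrac23\sqrt{\pi C_0(C_0+2)}$ rather than a looser one is what makes the eventual numerical threshold $\alpha\le 0.034$ as large as it is.
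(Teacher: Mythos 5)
Your strategy is genuinely different from the paper's, and as written it has gaps I do not see how to close. The paper never uses the Euler--Lagrange equation, the curvature, or the oscillation of $v$ in this proof. It runs a self-improving inequality on the annulus half-width $\delta$ through the \emph{global} quantity $\Delta V := V_0 - V(\Omega_\eps)$: (i) minimality, i.e.\ testing $E_\eps$ against $B_1(x_0)$, gives $D(\Omega_\eps)\le \tfrac{\eps}{2\pi}\Delta V$, which combined with Bonnesen (Lemma \ref{lem:annulus}) yields $\delta^2\le\tfrac{\eps}{2}\Delta V\left(\tfrac{\eps}{2\pi}\Delta V+2\right)$; (ii) reflecting $B_1(x_0)\setminus\Omega_\eps$ across $\partial B_1(x_0)$ reduces $\Delta V$ to $2\int_{B_{1+\delta}(x_0)\setminus B_1(x_0)}|v^B(|x-x_0|)-v^B(1)|\,dx\le 2C_3\delta^2$, using only the $C^1$ bound \eqref{v1} on the radial profile $v^B$. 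Substituting (ii) into (i), dividing by $\delta^2$, and using $\delta^2\le\pi C_0(C_0+2)$ gives exactly the reverse of \eqref{e1}, so $\delta=0$ and $\Omega_\eps=B_1(x_0)$ outright, with no rigidity statement about curvature needed.

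The concrete gaps in your route are these. First, the link ``$\mathrm{osc}\,v$ controls the deviation from a disk via the isoperimetric deficit'' is unsupported: the deficit bound in the paper comes from the energy comparison with a unit disk (that is, from minimality), and it involves $\Delta V$, not the pointwise oscillation of $v$; your argument never actually invokes minimality, only the Euler--Lagrange equation, which every critical point satisfies. Second, the quantitative rigidity lemma ``a convex curve enclosing area $\pi$ whose curvature oscillates by $\eta$ lies within $O(\eta)$ of a unit circle'' is asserted but not proved, and there is no reason its constant would reproduce \eqref{e1}; the coefficient in \eqref{e1} is generated by the $\Delta V\to D\to\delta^2\to\Delta V$ loop, not by a curvature estimate. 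Third, the singular integral $\int_{\Omega_\eps\triangle B_1(x_0)}|x-y|^{-\alpha}\,dy$ for $x\in\partial\Omega_\eps$, which you correctly identify as your main obstacle, simply does not arise in the paper: after symmetrizing the double integral, everything is expressed through the smooth function $v^B$, and the symmetric difference enters only as the domain of integration of the Lipschitz quantity $|v^B(|x-x_0|)-v^B(1)|$. (Your existence/uniqueness argument for $\eps_1$ matches the paper's; your parenthetical claim $\eps_1\le\eps_0$ is neither proved nor needed, since convexity is a hypothesis of the proposition.)
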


\begin{proof}
  In view of Remark \ref{rem:C0}, the left-hand side of \eqref{C3}
  increases monotonically from zero to infinity as $\eps$ runs from
  zero to infinity. Hence there is a unique solution to \eqref{e1}.

  Now, testing $E_\eps$ with $B_1(x_0)$, where $x_0$ is as in Lemma
  \ref{lem:annulus}, gives
  \begin{align*}
    P(\Omega_\eps) + \eps V(\Omega_\eps) = E_\eps(\Omega_\eps) &\leq
    E_\eps(B_1(x_0)) = 2 \pi + \eps V_0,
  \end{align*}
  which is equivalent to
  \begin{align}
    D(\Omega_\eps) &\leq \frac{\eps}{2\pi} (V_0 - V(\Omega_\eps)) =:
    \frac{\eps}{2 \pi} \dv.
    \label{eq:D}
  \end{align}
  Combining \eqref{eq:delta} and \eqref{eq:D} then gives
  \begin{equation}
    \label{eq:deltanew}
    \delta^2 \leq  {\eps \over 2} \dv \left( {\eps \over 2 \pi} \dv + 2
    \right). 
  \end{equation}
  On the other hand, arguing as in \cite[Eqs. (7.14) and
  (7.15)]{km:cpam12} and applying Lemma \ref{l:vB}, we then find that
  \begin{align}
    \dv & \leq 2 \sint {B_1(x_0) \triangle \Omega_\eps}{|v^B(|x -
      x_0|) - v^B(1)|}{x} \notag \\
    & \leq 2 \left| {dv^B(1) \over dr} \right| \int_{B_{1+\delta}(x_0)
      \backslash B_1(x_0)} (|x - x_0| - 1) \, dx \notag \\
    & = 4 \pi \left| {dv^B(1) \over dr} \right| \int_0^\delta t (1 +
    t) dt \notag \\
    & = 2 \pi \left| {dv^B(1) \over dr} \right| \left( 1 + \frac23
      \delta \right) \delta^2,
\label{eq:dvboundnew}
\end{align}
where to arrive at the second line in \eqref{eq:dvboundnew} we
reflected all the points of the set $B_1 \backslash \Omega_\eps$ with
respect to $\partial B_1(x_0)$.  Furthermore, since in view of Lemmas
\ref{lem:deficit} and \ref{lem:annulus} we have
\begin{align}
  \label{del}
  \delta \leq \sqrt{\pi C_0 (C_0 + 2)},
\end{align}
from \eqref{eq:dvboundnew} and \eqref{v1} we get
\begin{align}
  \label{dv}
  \dv \leq 2 \delta^2 C_3.
\end{align}
Therefore, substituting the inequality in \eqref{dv} back to
\eqref{eq:deltanew} and then using \eqref{del} again yields that
either $\delta = 0$ or
\begin{align}
  \label{dvdv}
  \eps C_3 (\alpha, \eps) \Big[ \eps C_3 (\alpha, \eps) C_0 ( \alpha,
  \eps) \Big( C_0 (\alpha, \eps) + 2 \Big) + 2 \Big] \geq 1.
\end{align}
Since the latter is impossible by our assumption, the rest of the
proposition is proved.
\end{proof} 

\begin{figure}[t]
  \centering
  \includegraphics[width=4in]{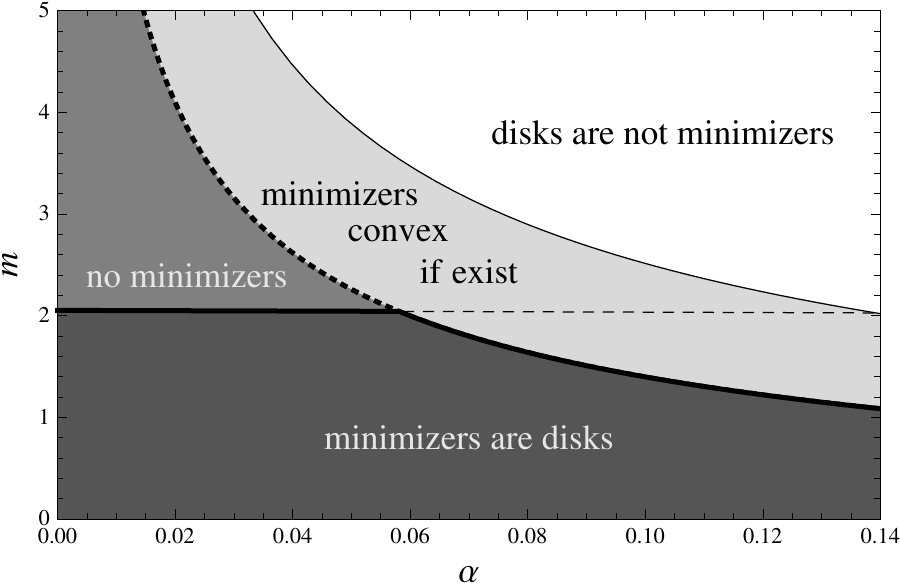}
  \caption{Regions of guaranteed convexity and existence of
    minimizers. The thin solid line shows the plot of
    $m(\eps_0(\alpha))$ from Lemma \ref{lem:convexity} and
    Eq. \eqref{eq:epsilon}, the dotted line shows the plot of
    $m(\eps_1(\alpha))$ from Proposition \ref{thm:disk} and
    Eq. \eqref{eq:epsilon}, the dashed line shows the plot of
    $m_{c1}(\alpha)$, and the thick solid line encloses the region in
    which minimizers exist and are disks. Light gray area shows the
    region where minimizers are convex, if they exist. Medium gray
    area shows the region where there are no minimizers. Dark gray
    area shows the region in which minimizers exist and are disks.}
  \label{f:m0}
\end{figure}

The dependences of $m(\eps_0)$ (thin solid line) and $m(\eps_1)$ on
$\alpha$ (dotted/solid line) computed numerically are presented in
Fig. \ref{f:m0}. These curves, together with the curve
$m_{c1}(\alpha)$ (dashed/solid line) separate the parameters into
several regions (see the caption for an explanation). Specifically,
the region below the thick solid line indicates the parameters for
which the minimizer of $E$ with mass $m$ exists and is a disk, while
the region above the solid line and below the dotted line is where no
minimizers exist. Our numerical results indicate that one can chose
$m_0(\alpha) = m(\eps_1(\alpha))$, using Proposition \ref{thm:disk}
and Eq. \eqref{eq:epsilon}.  Then for any mass $m \in
(0,m_0(\alpha))$, the minimizer, provided it exists, is a disk.

%%%%
\section{Proof of the main Theorem}
\label{sec:results}

\begin{figure}[t]
\begin{center}
\includegraphics[width=4in]{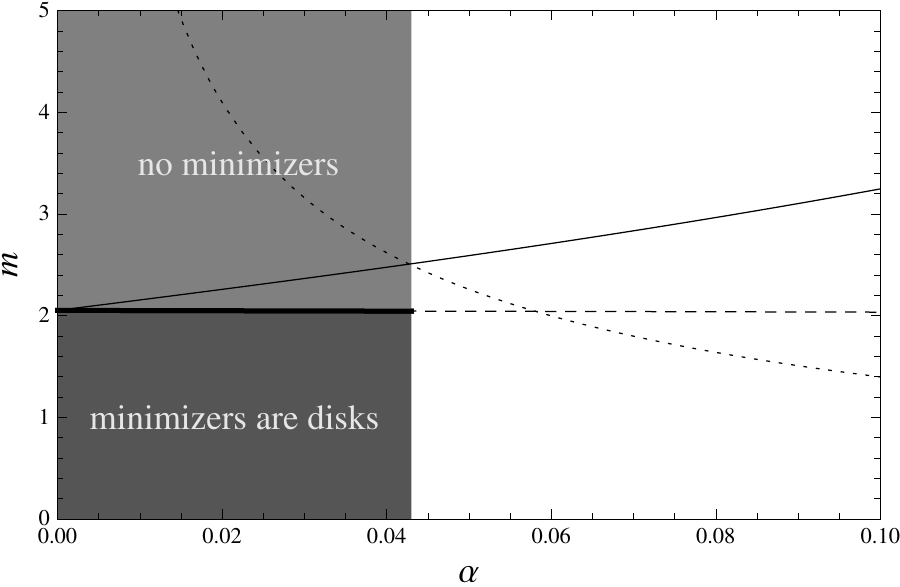}
\end{center}
\caption{Summary of the numerical results. Above the solid line no
  minimizers exist; below the dotted line minimizers, if they exist,
  are disks; above the dashed/thick solid line (lighter gray region)
  disks are not minimizers; below the solid line (dark gray region)
  minimizers are disks. }
\label{fig:alphac}
\end{figure}

Figure \ref{fig:alphac} summarizes our results obtained numerically
from evaluating the different criteria of existence and non-existence
obtained in the preceding sections. From this figure one can see that
the curve $m_2(\alpha)$, above which nonexistence of minimizers holds,
intersects the curve $m_0(\alpha)$, below which minimizers must be
disks, at $\alpha = \alpha_0 \approx 0.04273$. This indicates that the
statement of Theorem \ref{t:main} should hold below this value of
$\alpha$. In the rest of this section, we give an analytical proof of
this fact with a slightly reduced value of $\alpha_0$. The only
difficulty at this point is that the dependences of $m_0$, $m_2$ and
$m_{c1}$ on $\alpha$ are given by extremely complicated algebraic
formulas and, therefore, their qualitative behavior (e.g.,
monotonicity) is not easy to establish. Instead, we simply estimate
those functions explicitly for $\alpha \in (0,\alpha_0]$, using the
known behavior of the Gamma function and other functions appearing in
the estimates. Note that our analytical estimates below are rather ad
hoc and are not intended to be completely optimal. We believe that
$\alpha_0 = 0.0427$, which comes from our numerical results, should
give essentially the best constant with our approach. Proving this
fact would be an extremely tedious exercise in calculus, which we
decided not to pursue.

\begin{proof}[Proof of Theorem \ref{t:main}] 
  Since $\Gamma(z)$ is a monotonically increasing function of $z$ for
  $z \geq 1.966$ and a monotonically decreasing function of $z$ for $0
  < z \leq 1$, for all $0 < \alpha \leq 0.034$ we can bound its values
  that appear in our estimates as follows:
\begin{align*}
  \label{Gammas}
  0.986 & \leq \Gamma(2 - \alpha) \leq 1, \\
  0.992 & \leq \Gamma \left( 2 - {\alpha \over 2} \right) \leq 1, \\
  1.968 & \leq \Gamma \left( 3 - {\alpha \over 2} \right) \leq 2, \\
  1 & \leq \Gamma(1 - \alpha) \leq 1.021.
\end{align*}
Then from \eqref{mc1} we find that $2.007 \leq m_{c1}(\alpha) \leq
2.087$. 

Next, define (here and everywhere below the constants are as in the
previous sections, with the parametric dependences always indicated)
\begin{align*}
  F_1(\alpha,\eps):= \eps C_3(\alpha,\eps) \Big[ \eps C_3(\alpha,\eps)
  C_0(\alpha,\eps) \Big( C_0(\alpha,\eps) + 2 \Big) + 2 \Big] -1.
\end{align*}
Assume $\eps \leq 0.846$ and $0 < \alpha \leq 0.034$. Using the bounds
above, we then get
\begin{align*}
  C_0(\alpha, \eps) &\leq 0.121,
  \\
  C_3(\alpha, \eps) &\leq 0.557,
  \\
  F_1(\alpha, \eps) & < 0.
\end{align*}
By Proposition \ref{thm:disk}, it then follows that $\eps_1(\alpha) >
0.846,$ and, hence, $m(\eps_1) > 2.806,$ for $0 < \alpha \leq 0.034$.

Similarly, we can define 
\begin{align*}
F_2(\alpha, \eps):=\frac{1}{1 + C_0(\alpha, \eps)} - 2 \eps (C_2(\alpha) -
  C_1(\alpha, \eps) ).
 \end{align*}
 We find that if $\eps \leq 0.846$ and $ 0 < \alpha \leq 0.034$, then
\begin{align*}
  C_1(\alpha, \eps) &\geq 3.009,
  \\
  C_2(\alpha) &\leq 3.196,
  \\
  F_2(\alpha, \eps) &\geq 0.575 > 0.
\end{align*}
Therefore, by Lemma \ref{lem:convexity} we have $\eps_0(\alpha)
>0.846$ and, hence, $m(\eps_0) > 2.806$ for $0 < \alpha \leq 0.034$.

Finally, we wish to obtain an upper bound for $m_2(\alpha)$.  Define
\begin{align*}
  F_3(\alpha, R) := \rho_0(\alpha, R) - \rho_{c1}(\alpha),
\end{align*}
and recall that
\begin{align*}
  \rho_{c1}(\alpha) = {2 \over R_{c1}(\alpha)} + {V_0(\alpha) \over
    \pi} R_{c1}^{2 - \alpha}(\alpha).
\end{align*}
Then for $0 < \alpha \leq 0.034$ and $R=0.945$ we have
\begin{align*}
  0.799 &\leq R_{c1}(\alpha) \leq 0.815
  \\
  \rho_{c1}(\alpha) &\leq 4.656,
  \\
  \rho_0(\alpha, R) &\geq 4.677,
  \\
  F_3(\alpha, R) & \geq 0.021 > 0.
\end{align*}
Thus, by Proposition \ref{thm:nonexistence} and the arguments in the
proof of Lemma \ref{lem:rho0unique} we have $R_0(\alpha) < 0.945$ and,
hence, $m_2(\alpha) < 2.806$ for $0 < \alpha \leq 0.034$.

In conclusion, $m_2(\alpha) < \min \left( m(\eps_0), m(\eps_1)
\right)$ for every $\alpha \in (0, 0.034]$, which proves the result.
\end{proof}

\noindent \textbf{Acknowledgments.} The work of C. B. M. was supported,
in part, by NSF via grants DMS-0908279, DMS-1119724 and DMS-1313687.

\bibliography{../mura,../nonlin,../stat}

\begin{thebibliography}{10}

\bibitem{ambrosio}
L.~Ambrosio, N.~Fusco, and D.~Pallara.
\newblock {\em Functions of bounded variation and free discontinuity problems}.
\newblock Oxford Mathematical Monographs. The Clarendon Press, New York, 2000.

\bibitem{gamow30}
G.~Gamow.
\newblock Mass defect curve and nuclear constitution.
\newblock {\em Proc. Roy. Soc. London A}, 126:632--644, 1930.

\bibitem{bohr36}
N.~Bohr.
\newblock Neutron capture and nuclear constitution.
\newblock {\em Nature}, 137:344--348, 1936.

\bibitem{bohr39}
N.~Bohr and J.~A. Wheeler.
\newblock The mechanism of nuclear fission.
\newblock {\em Phys. Rev.}, 56:426--450, 1939.

\bibitem{km:cpam13}
H.~Kn\"upfer and C.~B. Muratov.
\newblock On an isoperimetric problem with a competing non-local term. {II.
  The} general case.
\newblock {\em Commun. Pure Appl. Math.}, 2013 (published online).

\bibitem{ohta86}
T.~Ohta and K.~Kawasaki.
\newblock Equilibrium morphologies of block copolymer melts.
\newblock {\em Macromolecules}, 19:2621--2632, 1986.

\bibitem{choksi11}
R.~Choksi and M.~A. Peletier.
\newblock Small volume fraction limit of the diblock copolymer problem: {II.
  Diffuse} interface functional.
\newblock {\em SIAM J. Math. Anal.}, 43:739--763, 2011.

\bibitem{choksi12}
R.~Choksi.
\newblock On global minimizers for a variational problem with long-range
  interactions.
\newblock {\em Quart. Appl. Math.}, LXX:517--537, 2012.

\bibitem{km:cpam12}
H.~Kn\"upfer and C.~B. Muratov.
\newblock On an isoperimetric problem with a competing non-local term. {I. The}
  planar case.
\newblock {\em Comm. Pure Appl. Math.}, 66:1129--1162, 2013.

\bibitem{lu14}
J.~Lu and F.~Otto.
\newblock Nonexistence of minimizer for {Thomas-Fermi-Dirac-von Weizs\"acker}
  model.
\newblock {\em Comm. Pure Appl. Math.}, 2013 (publlished online).

\bibitem{cicalese13}
M.~Cicalese and E.~Spadaro.
\newblock Droplet minimizers of an isoperimetric problem with long-range
  interactions.
\newblock {\em Comm. Pure Appl. Math.}, 66:1298--1333, 2013.

\bibitem{julin12}
V.~Julin.
\newblock Isoperimetric problem with a {Coulombic} repulsive term.
\newblock {\em Indiana Univ. Math. J.}, 2014 (to appear).
\newblock Preprint: arXiv:1207.0715.

\bibitem{bonacini13}
M.~Bonacini and R.~Cristoferi.
\newblock Local and global minimality results for a nonlocal isoperimetric
  problem on {$\mathbb R^N$}.
\newblock Preprint: arXiv:1307.5269, 2013.

\bibitem{acerbi13}
E.~Acerbi, N.~Fusco, and M.~Morini.
\newblock Minimality via second variation for a nonlocal isoperimetric problem.
\newblock {\em Commun. Math. Phys.}, 322:515--557, 2013.

\bibitem{sternberg11}
P.~Sternberg and I.~Topaloglu.
\newblock A note on the global minimizers of the nonlocal isoperimetric problem
  in two dimensions.
\newblock {\em Interfaces Free Bound.}, 13:155--169, 2010.

\bibitem{topaloglu13}
I.~Topaloglu.
\newblock On a nonlocal isoperimetric problem on the two-sphere.
\newblock {\em Comm. Pure Appl. Anal.}, 12:597--620, 2013.

\bibitem{figalli14}
A.~Figalli, N.~Fusco, F.~Maggi, V.~Millot, and M.~Morini.
\newblock Isoperimetry and stability properties of balls with respect to
  nonlocal energies.
\newblock Preprint: arXiv:1403.0516, 2014.

\bibitem{abramowitz}
M.~Abramowitz and I.~Stegun, editors.
\newblock {\em Handbook of mathematical functions}.
\newblock National Bureau of Standards, 1964.

\bibitem{ambrosio98}
L.~Ambrosio and E.~Paolini.
\newblock Partial regularity for quasi minimizers of perimeter.
\newblock {\em Ricerche Mat.}, 48(supplemento):167--186, 1998.

\bibitem{tamanini84}
I.~Tamanini.
\newblock Regularity results for almost minimal oriented hypersurfaces in
  {$\mathbb R^N$}.
\newblock {\em Quaderni Dipartimento Mat. Univ. Lecce}, 1:1--92, 1984.

\bibitem{bonnesen24}
T.~Bonnesen.
\newblock \"{U}ber das isoperimetrische {D}efizit ebener {F}iguren.
\newblock {\em Math. Ann.}, 91:252--268, 1924.

\bibitem{burago}
Yu.~D. Burago and V.~A. Zalgaller.
\newblock {\em Geometric inequalities}.
\newblock Springer-Verlag, New York, 1988.

\bibitem{osserman79}
R.~Osserman.
\newblock Bonnesen-style isoperimetric inequalities.
\newblock {\em Amer. Math. Monthly}, 86:1--29, 1979.

\bibitem{fuglede91}
B.~Fuglede.
\newblock Bonnesen's inequality for the isoperimetric deficiency of closed
  curves in the plane.
\newblock {\em Geom. Dedicata}, 38:283--300, 1991.

\end{thebibliography}

\bibliographystyle{unsrt}

\end{document}